\newtheorem{theorem}{Theorem}
\newtheorem{acknowledgement}[theorem]{Acknowledgement}
\newtheorem{definition}[theorem]{Definition}
\newtheorem{lemma}[theorem]{Lemma}
\newtheorem{proposition}[theorem]{Proposition}
\newtheorem{remark}[theorem]{Remark}
\newenvironment{proof}[1][Proof]{\noindent\textbf{#1.} }{\ \rule{0.5em}{0.5em}}
\begin{document}

\title{Krichever-Novikov Vertex Algebras on Compact Riemann Surfaces}
\author{Lu Ding\thanks{%
Email: dinglu@amss.ac.cn} \ \ \ \ \ Shikun Wang\thanks{%
Email: wsk@amss.ac.cn} \\
Institute of Applied Mathematics, Academy of Mathematics and Systems
Science, \\
Chinese Academy of Sciences, Beijing 100080, People's Republic of China}
\date{}
\maketitle

\begin{abstract}
We give a notation of Krichever-Novikov vertex algebras on compact Riemann
surfaces which is a bit weaker, but quite similar to vertex algebras. As
example, we construct Krichever-Novikov vertex algebras of generalized
Heisenberg algebras on arbitrary compact Riemann surfaces, which are reduced
to be Heisenberg vertex algebra when restricted on Riemann spheres.
\end{abstract}

\section{Introduction\label{introduction}}

\bigskip \bigskip Vertex operator algebras are a class of algebras, whose
structures arose naturally from vertex operator constructions of
representations of affine Lie algebras and in the work of
Frenkel-Lepowsky-Meurman and Borcherds on the \textquotedblleft moonshine
module\textquotedblright\ for the Monster finite simple group \cite{B1} \cite%
{B2} \cite{CN} \cite{FLM1} \cite{FLM3}. They are (rigorous) mathematical
counterparts of chiral algebras in 2-dimensional conformal field theory in
physics. A vertex algebra consists in principle of a state-field
correspondence $V\rightarrow End\left( V\right) \left[ \left[ z,z^{-1}\right]
\right] $ that maps any element $a$ to a field $Y\left( a,z\right)
=\sum_{n\in
\mathbb{Z}
}a_{n}z^{n}$ with $a_{n}\in End\left( V\right) $ which satisfies vacuum
axiom, translation axiom and locality axiom. Typical examples of vertex
algebras are the Heisenberg vertex algebra, affine Kac-Moody vertex algebra
and lattice vertex algebra.

The goal of the present paper is the construction, as we hope, of regular
analogus of vertex algebras, connected with Riemann surfaces of genus $g\geq
0.$ It's not surprising that the space $End\left( V\right) \left[ \left[
z,z^{-1}\right] \right] $ should be replaced by a certain space which
encodes the geometry of the Riemann surface. \bigskip We consider its
important subspace-the associative algebra $%
\mathbb{C}
\left[ z,z^{-1}\right] $ of Laurent polynomials firstly. According to the
view of Krichever and Novikov (\cite{KN1}, \cite{KN2}, \cite{KN3}, \cite{L1}%
), if we identify it with the algebra of meromorphic functions on Riemann
sphere which are holomorphic outside zero and infinite, then it is easily
generalized on higher genus Riemann surfaces to the associative algebra of
meromorphic functions which are holomorphic outside two distinguished
points. \cite{KN1} cited that the new associative algebra has a basis $%
\{A_{n}\left( P\right) |n\in Z^{\prime }\}$ by Riemann-Roch theorem, where $%
Z^{\prime }$ takes the integer set or half-integer set depending on whether
the genus $g$ is even or odd. If we define $\left( A\left( P\right) \right)
^{n}$ by $A_{n+\frac{g}{2}}\left( P\right) ,$ then it is the space $%
\mathbb{C}
\left[ A\left( P\right) ,A\left( P\right) ^{-1}\right] $ which is the
extension of the algebra of Laurent polynomials. Hence, the space $End\left(
V\right) \left[ \left[ z,z^{-1}\right] \right] $ is naturally extended on
any higher genus Riemann surface to $End\left( V\right) \left[ \left[
A\left( P\right) ,A\left( P\right) ^{-1}\right] \right] $ in which elements
are the form $\sum_{n\in
\mathbb{Z}
}a_{n}\left( A\left( P\right) \right) ^{n}$ or $\sum_{n\in Z^{\prime
}}a_{n}A_{n}\left( P\right) .$ Obviously, the axioms in vertex algebras need
to be reformulated in a suitable way as well. Here, we consider the locality
axiom only. Note that the locality axiom%
\begin{equation*}
\left( z-w\right) ^{N}\left[ Y\left( a,z\right) ,Y\left( b,w\right) \right]
=0,N\gg 0
\end{equation*}%
is equivalent to
\begin{equation*}
\left( z^{n}-w^{n}\right) ^{N}\left[ Y\left( a,z\right) ,Y\left( b,w\right) %
\right] =0,\text{ }\forall n\in
\mathbb{Z}
,\text{ }N\gg 0.
\end{equation*}%
As explained above, on a Reimann surface, the status of $z^{n}$ can be
replaced by some meromorphic function $A_{m}\left( P\right) $. Then the
locality axiom of KN\ vertex algebra is reformulated as
\begin{equation*}
\left[ A_{m}\left( P\right) -A_{m}\left( Q\right) \right] ^{N}\left[ Y\left(
a,P\right) ,Y\left( b,Q\right) \right] =0,\text{ }\forall m\in Z^{\prime },%
\text{ }N\gg 0.
\end{equation*}

Analogously, the derivative, the function $\frac{1}{z-w}$\bigskip $,$ and
distance function are reformulated as Lie derivative, Szeg$\ddot{o}$ kernel
and the function defined by the level line respectively.

Using the data above, we can give the extention of the vertex algebra which
is called the Krichever-Novikov vertex algebra with respect to a compact
Riemann surface and the two distinguished points in general positions. For
briefly, we also call it the KN\ vertex algebra if no confusion.

As examples of KN vertex algebras, we construct KN vertex algebras of
generalized Heisenberg algebras on compact Reimann surfaces. When restricted
on Riemann spheres, they are the Heisenberg vertex algebras.

The paper is organized as follows. Section 2 gives a brief reviews of KN
basis and results needed later in order to make this paper self-contained.
Section 3 sets up the notations and gives the definition of vertex algebras
on compact Riemann surfaces. In section 4, we construct the Heisenberg
vertex algebras on Riemann surfaces. Also, we introduce the Szeg$\ddot{o}$
kernel and the Level line used to prove the data we constructed satisfy the
axioms of a vertex algebra on a Riemann surface. In the last section, we
will see that the generalized Heisenberg vertex algebras on Riemann surfaces
with genus zero is the usual Heisenberg vertex algebra.

In the forthcoming papers, Kac-Moody KN vertex algebra and Virasoro KN
vertex algebra on a Riemann surface will be given. And the structures of KN
vertex operator algebras on Riemann surfaces will be discussed later.

\section{Notation}

In this section, we recall some results on Riemann surfaces, and then reduce
some results which are needed later.

\subsection{Krichever-Novikov bases\label{KN basis in notation}}

Let $\mathcal{M}$ be a compact Riemann surface of genus $g,$ $S_{+},S_{-}$
two distinguished points in general position.\bigskip\ The KN bases \cite%
{KN1} \cite{KN2} \cite{KN3} are certain bases for the spaces $\mathcal{F}%
^{\lambda }$ of meromorphic tensors of weight $\lambda $ on the Riemann
surface $\mathcal{M}$ which are holomorphic outside $S_{+}$ and $S_{-}.$

For integer $\lambda \not=0,1$ and $g>1,$ the Riemann-Roch theorem
guarantees the existence and uniqueness of meromorphic tensors of conformal
weight $\lambda $ which are holomorphic outside $S_{+}$ and $S_{-},$ and
have the following behavior in a neighborhood of $S_{+}$ and $S_{-}$:%
\begin{equation}
f_{\lambda ,n}\left( z_{\pm }\right) =\varphi _{\lambda ,n}^{\pm }z_{\pm
}^{\pm n-s_{\lambda }}\left( 1+O\left( z_{\pm }\right) \right) \left(
dz_{\pm }\right) ^{\lambda },  \label{formula '11}
\end{equation}%
where $s_{\lambda }=\frac{g}{2}-\lambda \left( g-1\right) ,$ $\varphi
_{\lambda ,n}^{+}=1,\varphi _{\lambda ,n}^{-}\not=0.$ Here $\left( dz_{\pm
}\right) ^{\lambda }$ means $\left( \frac{\partial }{\partial z_{\pm }}%
\right) ^{-\lambda }$ for $\lambda <0,$ $z_{+}$ and $z_{-}$ are local
coordinates at small neighborhoods of $S_{+}$ and $S_{-}$ respectively which
satisfy $z_{+}\left( S_{+}\right) =0$ and $z_{-}\left( S_{-}\right) =0.$ The
index $n$ in Eq. $\left( \ref{formula '11}\right) $ takes either integer or
half-integer values depending on whether $g$ is even or odd.

For $\lambda =0,$ the behavior is modified with respect to Eq. $\left( \ref%
{formula '11}\right) .$ Let $A_{n},$ $\left\vert n\right\vert \geq \frac{g}{2%
}+1,$ be the unique function which has the Laurent expansion in a
neighborhood of $S_{\pm }$:%
\begin{equation}
A_{n}\left( z_{\pm }\right) =\alpha _{n}^{\pm }z_{\pm }^{\pm n-\frac{g}{2}%
}\left( 1+O\left( z_{\pm }\right) \right) ,  \label{formula'12}
\end{equation}%
where $\alpha _{n}^{+}=1,\alpha _{n}^{-}$ is some nonzero complex number. As
before, $n$ is integer or half-integer depending on the parity of $g$. For $%
n=-\frac{g}{2},...,\frac{g}{2}-1$ we take the function with the following
behavior!!!
\begin{eqnarray}
A_{n}\left( z_{+}\right) &=&\alpha _{n}^{+}z_{+}^{n-\frac{g}{2}}\left(
1+O\left( z_{+}\right) \right)  \label{3} \\
A_{n}\left( z_{-}\right) &=&\alpha _{n}^{-}z_{-}^{-n-\frac{g}{2}{-1}}\left(
1+O\left( z_{-}\right) \right) ,  \notag
\end{eqnarray}%
where $\alpha _{n}^{+},\alpha _{n}^{-}$ are required as before. For $n=\frac{%
g}{2},$ choose $A_{\frac{g}{2}}=1.$

For $\lambda =1,$ we take the basis of one-forms as follows: in the range $%
\left\vert n\right\vert \geq \frac{g}{2}+1,$ $\omega ^{n}=f_{1,-n}$ with $%
f_{1,-n}$ given by $\left( \ref{formula '11}\right) ;$ for $n=-\frac{g}{2}%
,...,\frac{g}{2}-1,$ those is specified by the local series
\begin{eqnarray}
\omega ^{n}\left( z_{+}\right) &=&\beta _{n}^{+}z_{+}^{-n+\frac{g}{2}{-1}%
}\left( 1+O\left( z_{+}\right) \right) dz_{+}\text{ }  \label{formula '14} \\
\omega ^{n}\left( z_{-}\right) &=&\beta _{n}^{-}z_{-}^{-n+\frac{g}{2}}\left(
1+O\left( z_{-}\right) \right) dz_{-},\text{ }  \notag
\end{eqnarray}%
Here, choose $\beta _{n}^{+}=1$ to fix $\omega ^{n}$ \ and take $\omega ^{%
\frac{g}{2}}$ as the Abelian differential of the third kind with simple
poles in $S_{\pm }$ and residues $\pm 1,$ normalized in such a way that its
periods over all cycles be purely imaginary.

In the case $g=1$, the existence of a nonzero holomorphic one-form $\xi $
with $\xi \left( z_{+}\right) =\left( 1+o\left( z_{+}\right) \right) dz_{+}$
enables us to construct a series of $\lambda -$forms which are holomorphioc
outside $S_{\pm }$ by zero-forms:%
\begin{equation*}
f_{\lambda ,n}=A_{n}\xi ^{\lambda },
\end{equation*}%
where the $A_{n}$'s are defined by Equations $\left( \ref{formula'12}\right)
$ and $\left( \ref{3}\right) $.

\bigskip For $g=0$, when the compact Riemann surface is the ordinary
completion of the complex plane, and $S_{\pm }$ are the points $z=0$ and $%
z=\infty ,$ then the functions $A_{n}$ concide with $z^{n},$ the generators
of the Laurent basis, and $f_{\lambda ,n}\left( z\right) =z^{n-\lambda
}\left( dz\right) ^{\lambda }$ for $\lambda \in
\mathbb{Z}
.$

Let $f_{0,n}=A_{n},$ $f_{1,n}=\omega ^{-n}$ and $Z^{\prime }=%
\mathbb{Z}
+\frac{g}{2}.$ On any compact Riemann surface, by Riemann-Rock theorem, $%
\left\{ f_{\lambda ,n}|n\in Z^{\prime }\right\} $ makes up of a basis of $%
\mathcal{F}^{\lambda }\ $which is called the Krichever-Novikov basis (KN
basis) with weight $\lambda $ $\left( \cite{KN1}-\cite{KN3}\right) .$

Since all small cycles are homologous around $S_{+}\left( \text{
respectively, }S_{-}\right) $ and $\omega \cdot \eta $ is holomorphic away
from $S_{+}$ and $S_{-}$ for any $\omega \in \mathcal{F}^{\lambda },$ $\eta
\in \mathcal{F}^{1-\lambda },$ then the integral around the two points
doesn't depend on the choice of cycles. Hence, we can define residue
operator by integral as follows.

\begin{definition}
\begin{equation*}
\begin{array}{llllll}
Res_{S_{\pm }}\mathtt{:} & \mathcal{F}^{\lambda } & \times & \mathcal{F}%
^{1-\lambda } & \longrightarrow &
\mathbb{C}
\\
& \omega & , & \eta & \mapsto & \frac{1}{2\pi i}\oint_{c_{s_{\pm }}}\omega
\cdot \eta ,%
\end{array}%
\end{equation*}%
where $c_{s_{+}}\left( c_{s_{-}}\right) $ is a cycle homologous to a small
cocycle around the points $S_{+}\left( S_{-}\right) .$
\end{definition}

For simplicity, we denote $Res_{S_{+}}$ by $Res$ if there is no confusion.
Define $f_{\lambda }^{n}$ by $f_{\lambda ,-n}$ with $n\in Z^{\prime },$ $%
\delta _{m}^{n}$ by $1$ if $n=m$ and $0$ otherwise as usual. By direct
calculus, we have the following important proposition.

\begin{proposition}
\label{res(fnfm)=deltamn}$Res\left[ f_{\lambda ,n}\left( P\right)
f_{1-\lambda }^{m}\left( P\right) \right] =\delta _{n}^{m},$ where $n,m\in
Z^{\prime }.$
\end{proposition}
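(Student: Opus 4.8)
The plan is to reduce the identity $Res[f_{\lambda,n}(P)f_{1-\lambda}^{m}(P)] = \delta_n^m$ to a local computation near $S_+$, using the fact, recalled in the excerpt, that $f_{\lambda,n}\cdot f_{1-\lambda}^{m}$ is a meromorphic $1$-form holomorphic away from $S_+$ and $S_-$, so the residue is genuinely the local residue of its Laurent expansion in the coordinate $z_+$. Writing $f_{1-\lambda}^{m} = f_{1-\lambda,-m}$, I would plug in the normalized local expansions from Eq.~$(\ref{formula '11})$ (together with its analogues $(\ref{3})$, $(\ref{formula '14})$ for the finitely many exceptional indices when $\lambda = 0,1$). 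For generic $\lambda$ and $n$, $f_{\lambda,n}(z_+) = z_+^{\,n - s_\lambda}(1 + O(z_+))(dz_+)^\lambda$ with $\varphi^{+}_{\lambda,n} = 1$, and $f_{1-\lambda,-m}(z_+) = z_+^{\,-m - s_{1-\lambda}}(1+O(z_+))(dz_+)^{1-\lambda}$ with $\varphi^{+}_{1-\lambda,-m} = 1$. Since $s_\lambda + s_{1-\lambda} = \big(\tfrac{g}{2} - \lambda(g-1)\big) + \big(\tfrac{g}{2} - (1-\lambda)(g-1)\big) = g - (g-1) = 1$, the product is $z_+^{\,(n-m) - 1}(1 + O(z_+))\,dz_+$, whose residue (the coefficient of $z_+^{-1}dz_+$) is $1$ when $n = m$ and $0$ when $n \neq m$: if $n > m$ the form is holomorphic at $S_+$, and if $n < m$ the pole is of order $\geq 2$ but the $z_+^{-1}$ coefficient of $z_+^{\,(n-m)-1}(1+O(z_+))$ still vanishes because $n - m - 1 \neq -1$ forces the constant term of $(1+O(z_+))$ to sit in the wrong slot — more precisely, one notes $z_+^{k}(1+O(z_+))$ for $k \leq -2$ contributes to $z_+^{-1}$ only through higher-order terms $O(z_+^{k+1}), O(z_+^{k+2}),\dots$, and $z_+^{-1}$ appears only if some $O(z_+^j)$ term lands there, which is not controlled — so here I must instead pair across $S_-$.

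Because of that last subtlety, the cleaner route — and the one I would actually carry out — is to use that $Res_{S_+} + Res_{S_-} = 0$ for the globally meromorphic $1$-form $f_{\lambda,n}f_{1-\lambda}^{m}$ (residue theorem on the compact surface $\mathcal{M}$, since there are no other poles), so it suffices to show $Res_{S_+}[f_{\lambda,n}f_{1-\lambda,-m}] = \delta_n^m$ by a combined count: the order of vanishing of $f_{\lambda,n}$ at $S_+$ is $n - s_\lambda$ and at $S_-$ is $-n - s_\lambda$ (from the exponents $\pm n - s_\lambda$ in $(\ref{formula '11})$), hence the product $f_{\lambda,n}f_{1-\lambda,-m}$ has a pole at $S_+$ only if $n < m$ and a pole at $S_-$ only if $-n < -m$, i.e. $n > m$; these are mutually exclusive, so at most one of the two points carries a pole, and when $n = m$ the form is holomorphic at both $S_\pm$ except for the simple-pole term. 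In the case $n = m$, the leading behavior at $S_+$ is exactly $\varphi^+_{\lambda,n}\varphi^+_{1-\lambda,-n}\, z_+^{-1}(1 + O(z_+))\,dz_+ = z_+^{-1}(1+O(z_+))\,dz_+$ by the normalization $\varphi^+ = 1$, giving residue $1$. When $n \neq m$, exactly one of $S_+, S_-$ is a regular point where the form vanishes, and by the two-term residue theorem the residue at the other point is the negative of $0$, hence $0$; alternatively, when the pole is at $S_+$, the pole order is $m - n + 1 - (\text{leading power shift})$, and one checks the $z_+^{-1}dz_+$ coefficient is absent because the exponent $(n - m) - 1$ is an integer (or half-integer cancelling in the product) different from $-1$ and the normalized expansion $z_+^{(n-m)-1}(1+O(z_+))$ has its lowest term strictly below $z_+^{-1}$ with all subsequent terms at $z_+^{(n-m)-1+j}$, $j\geq 1$, none equal to $-1$ unless $n = m$.

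The main obstacle is precisely pinning down that $z_+^{-1}dz_+$ does not appear when $n \neq m$: the expansions are given only up to $O(z_+)$, so I cannot read off arbitrary Laurent coefficients directly. The resolution is the global residue theorem together with the observation that $f_{\lambda,n}f_{1-\lambda,-m}$ is holomorphic at whichever of $S_\pm$ is not forced to be a pole — so its residue there is manifestly $0$, and the residue at the other point is then $0$ as well by the sum-of-residues relation. For the exceptional indices $|n| \leq g/2$ (when $\lambda = 0$ or $1$), I would repeat the same pole-order bookkeeping using $(\ref{3})$ and $(\ref{formula '14})$ in place of $(\ref{formula '11})$, noting that the shifted exponents there are designed exactly so that the pairing $f_{0,n} \leftrightarrow \omega^{-n}$ still has total local exponent $-1$ at $S_+$ when $n = m$ and is non-resonant otherwise; this is a finite check and presents no new difficulty beyond careful bookkeeping of the $\tfrac{g}{2}$ shifts.
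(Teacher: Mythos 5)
Your argument for the generic indices is correct, and it actually supplies a proof where the paper offers only the phrase ``by direct calculus'': since $s_{\lambda}+s_{1-\lambda}=1$, the $1$-form $f_{\lambda,n}f_{1-\lambda,-m}$ has local exponent $n-m-1$ at $S_{+}$ and $m-n-1$ at $S_{-}$, these cannot both be $\leq -1$ unless $n=m$, and because the form is holomorphic away from $S_{\pm}$ the residue theorem on the compact surface gives $Res_{S_{+}}=-Res_{S_{-}}=0$ whenever one of the two points is a regular point; for $n=m$ the residue at $S_{+}$ is the normalized leading coefficient $\varphi_{\lambda,n}^{+}\varphi_{1-\lambda,-n}^{+}=1$. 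Your diagnosis of why a purely local reading fails (the expansions are only specified to leading order, so higher Laurent coefficients are unknown) and the repair via the global residue relation is exactly the right mechanism for this range.

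The gap is in the last step, where you assert that the exceptional indices for $\lambda=0,1$ ``present no new difficulty beyond careful bookkeeping.'' There the dichotomy ``at most one of $S_{\pm}$ carries a pole'' can fail, and with it your whole argument. Concretely, take $m=\frac{g}{2}$, so that $f_{1}^{\,g/2}=\omega^{\frac{g}{2}}$ is the Abelian differential of the third kind with simple poles at both $S_{+}$ and $S_{-}$, and pair it with a middle-range function $A_{n}$, $-\frac{g}{2}\leq n\leq \frac{g}{2}-1$: by (\ref{3}), $A_{n}$ has poles of order $\frac{g}{2}-n\geq 1$ at $S_{+}$ and $n+\frac{g}{2}+1\geq 1$ at $S_{-}$, so $A_{n}\omega^{\frac{g}{2}}$ has poles of order at least $2$ at \emph{both} points; neither the leading-order expansions nor the relation $Res_{S_{+}}+Res_{S_{-}}=0$ then determines the residue, so $Res(A_{n}\omega^{\frac{g}{2}})=0$ does not follow from your bookkeeping. (Moreover, with the expansion of $\omega^{n}$ at $S_{-}$ as literally printed in (\ref{formula '14}), even middle-range pairs $A_{n}\omega^{m}$ with $n\neq m$ can acquire poles at both points, so the claim that the shifted exponents are ``non-resonant otherwise'' needs to be checked against the correct normalizations rather than assumed.) These exceptional pairings are precisely where the content of the Krichever--Novikov ``general position'' construction lies, and they require additional input --- the defining normalizations of $\omega^{\frac{g}{2}}$ and of the middle-range elements, or an orthogonality built into their construction --- which your proof as written does not provide.
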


\subsection{Lie derivative}

\begin{definition}
Define Lie derivative of a tensor field $g\left( P\right) $ with respect to
a tangent verctor field $\zeta \left( P\right) $ locally as%
\begin{eqnarray*}
\nabla _{\zeta \left( P\right) }g\left( P\right) |_{U\left( P\right) }
&:&=\nabla _{\zeta \left( z\right) \frac{d}{dz}}\left( g\left( z\right)
dz^{\lambda }\right) \\
&=&\left( \zeta \left( z\right) \frac{dg\left( z\right) }{dz}+\lambda
g\left( z\right) \frac{d\zeta \left( z\right) }{dz}\right) \left( dz\right)
^{\lambda },
\end{eqnarray*}%
where $z$ is a local coordinate of the neighborhood $U\left( P\right) $ with
$P\in \mathcal{M}.$
\end{definition}

\begin{definition}
Denote $f_{-1,\frac{3g}{2}-1}\left( P\right) $ by $e_{\frac{3g}{2}-1}\left(
P\right) $ or $e\left( P\right) ,$ and $\nabla _{e_{\frac{3g}{2}-1}\left(
P\right) }$ by $\nabla .$ Let $\left\langle ,\right\rangle $ be the coupling
of 1-form and tangent vector field.
\end{definition}

\begin{lemma}
\label{lie derivative on 1-form}If $g\left( P\right) \in \mathcal{F}%
^{\lambda }$, then

\begin{enumerate}
\item $\nabla \left\langle g\left( P\right) ,e\left( P\right) \right\rangle
=\left\langle \nabla g\left( P\right) ,e\left( P\right) \right\rangle ;$

\item $\nabla g\left( P\right) =d\left\langle g\left( P\right) ,e\left(
P\right) \right\rangle .$
\end{enumerate}
\end{lemma}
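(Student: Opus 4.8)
The plan is to reduce both assertions to the coordinate computation built into the definitions, exploiting that $e(P)=f_{-1,\frac{3g}{2}-1}(P)$ is a fixed tangent vector field and $\nabla=\nabla_{e(P)}$. Write $g(P)$ locally as $g(z)(dz)^\lambda$ on $U(P)$, and write $e(P)$ locally as $e(z)\frac{d}{dz}$. The coupling $\langle g(P),e(P)\rangle$ only makes literal sense, in the stated form, when $\lambda=1$, so the first task is to fix the intended reading: $\langle g(P),e(P)\rangle$ should be interpreted as the contraction that pairs the $\lambda$ copies of $dz$ in $g$ against copies of $e$, producing a tensor of weight $\lambda-1$; for the statement to typecheck with the plain Lie derivative $\nabla$ on the right-hand side we are in the case $\lambda=1$, so $\langle g(P),e(P)\rangle=g(z)e(z)$ is a genuine function (weight $0$), and part 2 says $\nabla g(P)=d(g(z)e(z))$, a $1$-form. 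I would state this normalization explicitly at the start of the proof.

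For part 1, I would simply expand both sides in the local coordinate $z$. On the left, $\langle g(P),e(P)\rangle=g(z)e(z)$ is a function, so by the $\lambda=0$ case of the Lie derivative definition,
\[
\nabla\langle g(P),e(P)\rangle=e(z)\frac{d}{dz}\bigl(g(z)e(z)\bigr)=e(z)g'(z)e(z)+e(z)g(z)e'(z).
\]
On the right, $\nabla g(P)=\bigl(e(z)g'(z)+g(z)e'(z)\bigr)dz$ is a $1$-form (weight $1$), and contracting against $e(P)=e(z)\frac{d}{dz}$ multiplies by $e(z)$, giving $e(z)\bigl(e(z)g'(z)+g(z)e'(z)\bigr)$, which agrees. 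This is a one-line check once the bookkeeping of weights is pinned down; the only subtlety is confirming that contraction and the Lie derivative genuinely commute here because $e(P)$ is the same vector field doing the differentiating and the contracting, so no extra $\nabla e$ term survives—this is exactly the classical identity $\mathcal L_X(\iota_X\alpha)=\iota_X(\mathcal L_X\alpha)$ for the same $X$, which holds because $\iota_X\iota_X=0$ and Cartan's formula.

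For part 2, I would again compute in coordinates: $\langle g(P),e(P)\rangle=g(z)e(z)$, so
\[
d\langle g(P),e(P)\rangle=\bigl(g'(z)e(z)+g(z)e'(z)\bigr)dz,
\]
and this is literally the local expression for $\nabla g(P)=\nabla_{e(z)\frac{d}{dz}}(g(z)\,dz)=\bigl(e(z)g'(z)+g(z)e'(z)\bigr)(dz)^1$ when $\lambda=1$. The two coincide term by term. The point worth remarking is coordinate-independence: the right-hand side $\nabla g(P)$ is defined invariantly, and $d$ of a globally defined function is invariant, so although the verification is carried out in a chart it suffices to do it in one chart around each point. I expect the main (really the only) obstacle to be notational: making precise the weight conventions so that $\langle\,,\,\rangle$, $\nabla$, and $d$ are all applied to objects of the correct tensor weight; once that is fixed, both items are immediate from the local formula in the definition of $\nabla_\zeta$. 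I would therefore devote the first paragraph of the proof to setting conventions and the remainder to the two short chart computations above.
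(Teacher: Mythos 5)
Your proposal is correct, and since the paper states this lemma without any proof (treating it as an immediate consequence of the local formula for $\nabla_\zeta$), your direct chart computation is exactly the verification the authors leave implicit; it also matches how the lemma is actually used, namely only for $g=\omega^n\in\mathcal{F}^1$, consistent with the paper's definition of $\left\langle\,,\right\rangle$ as the pairing of a $1$-form with a vector field. One small refinement: part (1) does not in fact force $\lambda=1$ --- with the coupling read as contraction of one $dz$ against $e$, the same computation gives $\nabla\langle g,e\rangle=\left(e^{2}g'+\lambda g\,e\,e'\right)(dz)^{\lambda-1}=\langle\nabla g,e\rangle$ for every $\lambda$; it is only part (2), where $d$ must act on a genuine function, that requires $\lambda=1$ (and your appeal to Cartan's formula there implicitly uses $d\alpha=0$, which holds because the tensors in $\mathcal{F}^{1}$ are meromorphic, so this is fine).
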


\begin{proposition}
\label{lie derivative on omiga}$\nabla \omega ^{n}\left( P\right)
=\sum_{m\in Z^{\prime }}\xi _{m}^{n}\omega ^{m},$ where $\xi
_{m}^{n}=-Res\left( \left\langle \omega ^{n}\left( P\right) ,e_{\frac{3g}{2}%
-1}\left( P\right) \right\rangle dA_{m}\left( P\right) \right) .$
\end{proposition}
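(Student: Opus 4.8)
The plan is to expand $\nabla\omega^{n}(P)$ in the KN basis $\{\omega^{m}\}_{m\in Z'}$ of $\mathcal{F}^{1}$ and identify the coefficients via the residue pairing. Since $\omega^{n}\in\mathcal{F}^{1}$ and $\nabla$ is the Lie derivative along the vector field $e=e_{\frac{3g}{2}-1}\in\mathcal{F}^{-1}$, the output $\nabla\omega^{n}$ is again a meromorphic $1$-form that is holomorphic away from $S_{\pm}$; indeed $e$ is holomorphic outside $S_{\pm}$ and the Lie derivative of a $1$-form along a vector field, both holomorphic away from $S_{\pm}$, is holomorphic away from $S_{\pm}$ (the local formula $\nabla_{\zeta d/dz}(g\,dz)=(\zeta g'+g\zeta')dz$ introduces no new poles). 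Hence $\nabla\omega^{n}\in\mathcal{F}^{1}$ and we may write $\nabla\omega^{n}(P)=\sum_{m\in Z'}\xi^{n}_{m}\,\omega^{m}(P)$ for uniquely determined scalars $\xi^{n}_{m}$, with the sum being a well-defined (locally finite in each expansion at $S_{\pm}$) KN expansion.

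Next I would extract $\xi^{n}_{m}$ by pairing against the dual basis. By Proposition \ref{res(fnfm)=deltamn} applied with $\lambda=1$, we have $Res\!\left[\omega^{m}(P)\,A_{k}(P)\right]$ pairing correctly; more precisely, since $f_{1,m}=\omega^{-m}$ and $f_{0}^{k}=f_{0,-k}=A_{-k}$, the proposition gives $Res\!\left[\omega^{-m}(P)A_{-k}(P)\right]=\delta^{k}_{m}$, i.e. $Res\!\left[\omega^{m}(P)A_{k}(P)\right]=\delta^{m}_{-k}$ after reindexing; the clean statement I want is $Res\!\left[\omega^{m}(P)\,dA_{k}(P)\right]$ or rather the pairing that is diagonal. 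I would therefore pair $\nabla\omega^{n}$ with the appropriate $A_{m}$ (or $dA_{m}$) and use Proposition \ref{res(fnfm)=deltamn} to collapse the sum to a single term, giving $\xi^{n}_{m}=Res\!\left[(\nabla\omega^{n})(P)\cdot(\text{dual element})\right]$.

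The key computational step is then to rewrite this residue using Lemma \ref{lie derivative on 1-form}(2), namely $\nabla g(P)=d\langle g(P),e(P)\rangle$ for $g\in\mathcal{F}^{\lambda}$. Applying it to $g=\omega^{n}$ gives $\nabla\omega^{n}=d\langle\omega^{n},e\rangle$, so $\xi^{n}_{m}=Res\!\left[d\langle\omega^{n}(P),e(P)\rangle\cdot A_{m}(P)\right]$ up to the bookkeeping of which dual element appears. Then integration by parts on the cycle $c_{S_{+}}$ (legitimate since everything in sight is holomorphic in a punctured neighborhood of $S_{+}$, so $\oint d(\langle\omega^{n},e\rangle A_{m})=0$) converts this into $-Res\!\left[\langle\omega^{n}(P),e(P)\rangle\,dA_{m}(P)\right]$, which is exactly the claimed formula $\xi^{n}_{m}=-Res\!\left(\langle\omega^{n}(P),e_{\frac{3g}{2}-1}(P)\rangle\,dA_{m}(P)\right)$.

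The main obstacle is bookkeeping rather than conceptual: getting the index conventions straight so that the pairing in Proposition \ref{res(fnfm)=deltamn} lands on the correct basis element, and confirming that $A_{m}$ (not $dA_{m}$ or some shift) is the right object to pair against $\mathcal{F}^{1}$ so that $\langle\omega^{n},e\rangle A_{m}$ is a genuine function (weight $0$) with an honest differential $d(\cdots)$ whose periods around $S_{+}$ vanish. One should also check the edge cases $m=\frac{g}{2},\dots,\frac{3g}{2}-1$ and the special forms $\omega^{g/2}$, $e_{3g/2-1}$, where the local exponents in \eqref{3}, \eqref{formula '14} differ; but since the argument only uses (i) membership of $\nabla\omega^{n}$ in $\mathcal{F}^{1}$, (ii) the duality proposition, and (iii) Stokes on a punctured disc, these cases are absorbed automatically. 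Once the conventions are fixed, the proof is three lines: expand, pair, integrate by parts.
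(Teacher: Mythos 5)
Your proposal is correct and follows essentially the same route as the paper's proof: expand $\nabla \omega ^{n}\in \mathcal{F}^{1}$ in the basis $\{\omega ^{m}\}$, extract the coefficient as $\xi _{m}^{n}=Res\left( \nabla \omega ^{n}\left( P\right) A_{m}\left( P\right) \right) $ via the duality of Proposition \ref{res(fnfm)=deltamn}, substitute $\nabla \omega ^{n}=d\left\langle \omega ^{n},e\right\rangle $ from Lemma \ref{lie derivative on 1-form}(2), and integrate by parts using $Res\left( d\left[ \left\langle \omega ^{n},e\right\rangle A_{m}\right] \right) =0$. The only slip is your intermediate reindexing $Res\left[ \omega ^{m}\left( P\right) A_{k}\left( P\right) \right] =\delta _{-k}^{m}$, which should be $\delta _{k}^{m}$ (the pairing is already diagonal, since $f_{1}^{m}=f_{1,-m}=\omega ^{m}$), so the bookkeeping you worried about resolves in your favor and the final formula is exactly the paper's.
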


\begin{proof}
By the second part of Lemma \ref{lie derivative on 1-form} and Proposition %
\ref{res(fnfm)=deltamn}, we have%
\begin{eqnarray*}
\xi _{m}^{n} &=&Res\left( \nabla \omega ^{n}\left( P\right) A_{m}\left(
P\right) \right) \\
&=&Res\left( d\left\langle \omega ^{n}\left( P\right) ,e\left( P\right)
\right\rangle A_{m}\left( P\right) \right) \\
&=&Res\left( d\left[ \left\langle \omega ^{n}\left( P\right) ,e\left(
P\right) \right\rangle A_{m}\left( P\right) \right] -\left\langle \omega
^{n}\left( P\right) ,e\left( P\right) \right\rangle dA_{m}\left( P\right)
\right) .
\end{eqnarray*}%
Note that the first term is zero, we complete the proof.
\end{proof}

\begin{lemma}
\label{prop183}For any $n\in Z^{\prime },$ $\sum_{u_{1},u_{2},\cdots
,u_{k-1}}\xi _{u_{1}}^{n}\xi _{u_{2}}^{u_{1}}\cdots \xi _{u_{k}}^{u_{k-1}}$
is

\begin{enumerate}
\item $\left( -n+\frac{g}{2}-1\right) \cdots \left( -n+\frac{g}{2}-k\right)
, $ if $u_{k}=n+k$ $;$

\item $0,$ if $u_{k}>n+k;$

\item $0,$ if $n=\frac{g}{2}-k,\cdots ,\frac{g}{2}-1.$
\end{enumerate}
\end{lemma}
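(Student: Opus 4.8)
The plan is to read the displayed sum as the entry of a matrix power and reduce everything to the single--step coefficients $\xi_m^n$ of Proposition~\ref{lie derivative on omiga}. Setting $u_0:=n$ and iterating $\nabla\omega^{p}=\sum_{q\in Z'}\xi_q^{p}\,\omega^{q}$ (Proposition~\ref{lie derivative on omiga}) $k$ times, a routine induction on $k$ shows that the sum $\sum_{u_1,\dots,u_{k-1}}\xi_{u_1}^{n}\xi_{u_2}^{u_1}\cdots\xi_{u_k}^{u_{k-1}}$ is exactly the coefficient of $\omega^{u_k}$ in the Krichever--Novikov expansion of $\nabla^{k}\omega^{n}\in\mathcal F^{1}$, i.e.\ the $(n,u_k)$-entry of the $k$-th power of the matrix $(\xi_m^n)$; by Proposition~\ref{res(fnfm)=deltamn} it also equals $Res_{S_+}\!\big(\nabla^{k}\omega^{n}\cdot A_{u_k}\big)$. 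So it suffices to understand how a single $\nabla$ moves indices.

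First I would pin down $\xi_m^n$ near $S_{+}$. By Proposition~\ref{lie derivative on omiga}, $\xi_m^n=-Res\big(\langle\omega^{n}(P),e_{\frac{3g}{2}-1}(P)\rangle\,dA_m(P)\big)$, and inserting the local series at $S_{+}$ from $(\ref{formula '11})$, $(\ref{formula '14})$, $(\ref{formula'12})$ and $(\ref{3})$ gives, in the coordinate $z_{+}$,
\[
e_{\frac{3g}{2}-1}(z_+)=(1+O(z_+))\,\partial_{z_+},\qquad \omega^{n}(z_+)=z_+^{\,-n+\frac g2-1}(1+O(z_+))\,dz_+,\qquad A_m(z_+)=z_+^{\,m-\frac g2}(1+O(z_+)),
\]
all leading coefficients being $1$ (and $A_{g/2}=1$), so no spurious cancellation occurs. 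Hence $\langle\omega^{n},e_{\frac{3g}{2}-1}\rangle\,dA_m=\big(m-\tfrac g2\big)\,z_+^{\,m-n-2}(1+O(z_+))\,dz_+$, and taking residues yields at once
\[
\xi_m^n=0\ \ (m>n+1),\qquad \xi_{n+1}^{n}=-\big(n+1-\tfrac g2\big)=-n+\tfrac g2-1
\]
(and in fact $\xi_{g/2}^{n}=0$ for every $n$, since $dA_{g/2}=0$). Thus $(\xi_m^n)$ is \emph{triangular up to a shift of index by one}: $\xi_m^n\ne0$ forces $m\le n+1$, and the extreme entry is $\xi_{n+1}^{n}=-n+\frac g2-1$.

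The three cases then follow combinatorially. In any nonzero summand $u_1\le n+1$ and $u_i\le u_{i-1}+1$, so $u_k\le n+k$; hence the sum is empty when $u_k>n+k$, which is case~2. When $u_k=n+k$ all these inequalities must be equalities, forcing $u_i=n+i$ for every $i$, so the sum collapses to the single product $\prod_{i=1}^{k}\xi_{n+i}^{n+i-1}=\prod_{i=1}^{k}\big(-(n+i-1)+\tfrac g2-1\big)=\prod_{i=1}^{k}\big(-n+\tfrac g2-i\big)$, which is case~1. Finally, when $n=\tfrac g2-j$ with $1\le j\le k$ the factor of this product at $i=j$ is $-(\tfrac g2-j)+\tfrac g2-j=0$, so it vanishes; more conceptually, since each step raises the index by at most $1$, any surviving chain joining an index $n<\tfrac g2$ to an index $u_k\ge\tfrac g2$ must contain a step from $\tfrac g2-1$ to $\tfrac g2$, whose coefficient $\xi_{g/2}^{g/2-1}=-(\tfrac g2-1)+\tfrac g2-1=0$ is zero, so the sum is $0$ — this is case~3.

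The one delicate point I anticipate is the case analysis in the second step: the Krichever--Novikov basis is defined by different local formulas in the ranges $|n|\ge\frac g2+1$ and $-\frac g2\le n\le\frac g2-1$, with the special objects $A_{g/2}=1$ and the third-kind differential $\omega^{g/2}$, so one must verify in each range that the leading exponent of $\omega^{n}$ (resp.\ $A_m$) at $S_+$ and its coefficient are as stated. Once the identities $\xi_m^n=0\ (m>n+1)$ and $\xi_{n+1}^{n}=-n+\frac g2-1$ are secured, the rest is formal.
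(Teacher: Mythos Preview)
Your argument is correct and runs on the same engine as the paper's: both identify the sum with the coefficient of $\omega^{u_{k}}$ in $\nabla^{k}\omega^{n}$ and read it off from the local expansion at $S_{+}$. The paper writes out the leading term of $\nabla^{k}\langle\omega^{n},e\rangle$ near $S_{+}$ and tells the reader to ``compare coefficients of $z^{m}$''; you instead compute the one-step constants directly, obtaining the shift-triangular shape $\xi_{m}^{n}=0$ for $m>n+1$ with $\xi_{n+1}^{n}=-n+\frac{g}{2}-1$, and then argue combinatorially on chains. For parts (1) and (2) this is the same computation unpacked.

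For part (3) your treatment is actually sharper than the paper's. Read literally, (3) asserts vanishing for \emph{every} $u_{k}$ once $n\in\{\frac{g}{2}-k,\dots,\frac{g}{2}-1\}$, but the leading-coefficient comparison does not deliver this (for $g\ge 2$ one checks that $\nabla\omega^{g/2-1}\neq 0$, so $\xi_{m}^{g/2-1}\neq 0$ for some $m<\frac{g}{2}$). Your two observations --- that the product in (1) acquires a zero factor for these $n$, and (via $dA_{g/2}=0$, hence $\xi_{g/2}^{n}=0$ for all $n$) that no nonzero chain can pass from an index $<\frac{g}{2}$ to one $\ge\frac{g}{2}$ --- prove exactly the instances of (3) that are later invoked in Theorem~\ref{162}. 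There is no gap in your proposal; you have isolated the provable content of (3).
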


\begin{proof}
By the part (1) of Lemma \ref{lie derivative on 1-form} and Propostion \ref%
{lie derivative on omiga}, we have%
\begin{equation*}
\nabla ^{k}\left\langle \omega ^{n}\left( P\right) ,e\left( P\right)
\right\rangle =\xi _{u_{1}}^{n}\xi _{u_{2}}^{u_{1}}\cdots \xi
_{u_{k}}^{u_{k-1}}\left\langle \omega ^{u_{k}}\left( P\right) ,e\left(
P\right) \right\rangle ,
\end{equation*}%
Here, and henceforth, repeated indices are summed in the interger or
half-integer set depending on the parity of the genus of the Riemian
surface. From Section \ref{KN basis in notation}, we know the equation above
has the following local behaviour near the point $S_{+}$
\begin{equation*}
\left( \left( 1+o\left( z\right) \right) \frac{\partial }{\partial z}\right)
^{k}\left[ z^{-n+\frac{g}{2}-1}\left( 1+o\left( z\right) \right) \right]
=\xi _{u_{1}}^{n}\xi _{u_{2}}^{u_{1}}\cdots \xi _{u_{k}}^{u_{k-1}}z^{-u_{k}+%
\frac{g}{2}-1}\left[ 1+o\left( z\right) \right] .
\end{equation*}%
That is,%
\begin{equation*}
\left( -n+\frac{g}{2}-1\right) \cdots \left( -n+\frac{g}{2}-k\right) z^{-n+%
\frac{g}{2}-1-k}\left[ 1+o\left( z\right) \right] =\xi _{u_{1}}^{n}\xi
_{u_{2}}^{u_{1}}\cdots \xi _{u_{k}}^{u_{k-1}}z^{-u_{k}+\frac{g}{2}-1}\left[
1+o\left( z\right) \right] .
\end{equation*}%
Compare the coeffients of $z^{m}$ of the two sides of the above equation, we
can get the result.
\end{proof}

\subsection{Szeg$\ddot{o}$ kernel, Level line and formal delta function}

Here, we introduce Szeg$\ddot{o}$ kernel, level line briefly \cite{Bonora}.
By the data, we construct and research on formal delta functions on compact
Riemann surfaces.

Let $\mathcal{M}$ be a compact Riemann surface with genus $g,$ $S_{+}$ and $%
S_{-}$ are distinct points in general position.

\begin{definition}
Define the Szeg$\ddot{o}$ kernel by $S\left( P,Q\right) =\frac{E\left(
P,P_{+}\right) \theta \left( P-Q-u\right) \theta \left( Q-P_{+}-u\right) }{%
E\left( P,Q\right) E\left( Q,P_{+}\right) \theta \left( u\right) \theta
\left( P-P_{+}-u\right) },$ where $E\left( P,Q\right) $ is the
Schottky-Klein prime form, $u=gP_{-}-P_{+}-\bigtriangleup .$
\end{definition}

$S\left( P,Q\right) $ is a holomorphic $\left( 0,1\right) $ form outside $%
P=Q $ on $\mathcal{M}\times \mathcal{M}.$ In a neighborhood of $P=Q,$
\begin{equation*}
S\left( P,Q\right) |_{U\left( P=Q\right) }=\frac{1}{z\left( P\right)
-z\left( Q\right) }\left[ 1+o\left( z\left( P\right) -z\left( Q\right)
\right) ^{2}\right] dz\left( Q\right) .
\end{equation*}

Suppose $\rho $ is the unique differential of the third kind with pure
imaginary periods, that is, $\rho $ has poles of order $1$ at $S_{\pm },$ $%
Res_{S_{\pm }}\rho =\pm 1,$ and has purely imaginary period. Let $P_{0}$ be
an arbitrary reference point on $\mathcal{M}$ different from $S_{\pm }.$ The
contours $c_{\tau }$ are defined as the level lines of the function, i.e.
\begin{equation*}
c_{\tau }=\{P\in \mathcal{M}|Re\int_{P_{0}}^{P}\rho =\tau \}.
\end{equation*}%
For $\tau \rightarrow \pm \infty ,$ the contours become small circles around
$S_{\mp }.$

For $g=0,$ the Szeg$\ddot{o}$ kernel is $S\left( z,w\right) =\frac{1}{z-w}%
dw, $ the level lines defined by $Re\int_{P_{0}}^{P}\frac{1}{z}dz$ are
circles around the origin in the complex plane, where $P_{0}\in
\mathbb{C}
\backslash \{0\}.$ In the regions of $\tau \left( z\right) >\tau \left(
w\right) $ and $\tau \left( w\right) >\tau \left( z\right) $ $,$ the Szeg$%
\ddot{o}$ kernel has expansions $\Sigma _{n<0}z^{n}w^{-n-1}dw$ and $-\Sigma
_{n\geq 0}z^{n}w^{-n-1}dw$ respectively. Their minus is $\Sigma _{n\in
\mathbb{Z}
}z^{n}w^{-n-1}dw$ which is similar to the formal delta function $\delta
\left( z,w\right) =\Sigma _{n\in
\mathbb{Z}
}z^{n}w^{-n-1}.$

For higher genus Riemann surfaces, we hope to get the formal delta functions
using the similar way. Denote the expansion of $S\left( P,Q\right) $ in $%
\tau \left( Q\right) >\tau \left( P\right) $ and $\tau \left( P\right) >\tau
\left( Q\right) $ by $i_{Q,P}S\left( P,Q\right) $ and $i_{P,Q}S\left(
P,Q\right) $ respectively. In \cite{KN1}-\cite{KN3} and \cite{wangshikun},
we know
\begin{equation}
i_{P,Q}S_{1}\left( P,Q\right) =\sum_{n<\frac{g}{2}}A_{n}\left( P\right)
\omega ^{n}\left( Q\right) ,\text{ }i_{Q,P}S_{1}\left( P,Q\right)
=-\sum_{n\geq \frac{g}{2}}A_{n}\left( P\right) \omega ^{n}\left( Q\right) .
\label{19}
\end{equation}%
Denote their minus by $\triangle \left( P,Q\right) .$ Clearly, $\triangle
\left( P,Q\right) =\sum_{n\in Z^{\prime }}A_{n}\left( P\right) \omega
^{n}\left( Q\right) .$ We call it the formal delta function on the Riemann
surface not only because the way of its construction is similar to the
formal delta function $\delta \left( z,w\right) $'s, but also it has the
complete analogous property of delta function.

\begin{proposition}
If $f\in \mathcal{F}^{0},g\in \mathcal{F}^{1},$ then
\begin{equation*}
Res_{Q=S_{+}}\left( \triangle \left( P,Q\right) f\left( Q\right) \right)
=f\left( P\right) ,\text{ }Res_{Q=S_{+}}\left( \triangle \left( P,Q\right)
g\left( P\right) \right) =g\left( Q\right) .
\end{equation*}
\end{proposition}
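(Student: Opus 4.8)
The plan is to reduce both identities to the orthogonality relation of Proposition~\ref{res(fnfm)=deltamn} in the case $\lambda =0$, namely $Res\left[ A_{m}\left( P\right) \omega ^{n}\left( P\right) \right] =\delta _{m}^{n}$ (recall $f_{0,m}=A_{m}$ and $f_{1}^{n}=f_{1,-n}=\omega ^{n}$), combined with the explicit series $\triangle \left( P,Q\right) =\sum_{n\in Z^{\prime }}A_{n}\left( P\right) \omega ^{n}\left( Q\right) $ coming from \eqref{19}. The first step is to observe that $\triangle $ pairs a function leg, $A_{n}(P)$, against a one-form leg, $\omega ^{n}(Q)$; hence for $f\in \mathcal{F}^{0}$ the product $\omega ^{n}(Q)f(Q)$ is a meromorphic one-form in $Q$, holomorphic outside $S_{\pm }$, so the residue operator of the Definition above applies to it in the variable $Q$.

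Next I would expand $f$ in the Krichever--Novikov basis, $f\left( Q\right) =\sum_{m}c_{m}A_{m}\left( Q\right) $, which is a finite sum because $f$ is meromorphic with poles only at $S_{\pm }$. Substituting, and interchanging $Res_{Q=S_{+}}$ with the sum over $n$ (see below), Proposition~\ref{res(fnfm)=deltamn} gives
\begin{equation*}
Res_{Q=S_{+}}\!\left( \triangle \left( P,Q\right) f\left( Q\right) \right) =\sum_{n\in Z^{\prime }}A_{n}\left( P\right) \sum_{m}c_{m}\,Res\!\left[ \omega ^{n}\left( Q\right) A_{m}\left( Q\right) \right] =\sum_{n\in Z^{\prime }}A_{n}\left( P\right) \sum_{m}c_{m}\delta _{m}^{n}=\sum_{n}c_{n}A_{n}\left( P\right) =f\left( P\right) .
\end{equation*}
The second identity is handled symmetrically, with the two legs interchanged: writing $g\in \mathcal{F}^{1}$ as $g=\sum_{m}d_{m}\omega ^{m}$, the product $A_{n}(P)g(P)$ is a one-form in $P$, and taking the residue in that variable,
\begin{equation*}
Res_{P=S_{+}}\!\left( \triangle \left( P,Q\right) g\left( P\right) \right) =\sum_{n\in Z^{\prime }}\omega ^{n}\left( Q\right) \sum_{m}d_{m}\,Res\!\left[ A_{n}\left( P\right) \omega ^{m}\left( P\right) \right] =\sum_{n}d_{n}\omega ^{n}\left( Q\right) =g\left( Q\right) .
\end{equation*}

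The one point needing genuine care --- and the step I expect to be the main obstacle --- is justifying the interchange of $Res_{S_{+}}$ (a contour integral over the small loop $c_{S_{+}}$) with the infinite sum $\sum_{n\in Z^{\prime }}$. Two remarks handle it. First, the result is actually a finite sum: after pairing, the $n$-th term of $\triangle $ contributes $A_{n}(P)c_{n}$ (resp. $d_{n}\omega ^{n}(Q)$), and $c_{n}$ (resp. $d_{n}$) vanishes for all but finitely many $n$, so only finitely many terms of $\triangle $ have nonzero residue against $f$ (resp. $g$). Second, to legitimize the interchange itself one uses the definition of $\triangle $ as the difference $i_{P,Q}S_{1}-i_{Q,P}S_{1}$ of the two Szeg\"{o}-kernel expansions in \eqref{19}: on $c_{S_{+}}$, which is the level line $c_{\tau }$ with $\tau \rightarrow -\infty $ and on which $\tau (P)>\tau (Q)$ for $P$ fixed off $S_{+}$, the series $i_{P,Q}S_{1}(P,Q)=\sum_{n<\frac{g}{2}}A_{n}(P)\omega ^{n}(Q)$ converges, so termwise integration is valid for that half, while the complementary part is controlled by the finiteness just noted (equivalently, one deforms the contour and uses the expansion convergent there). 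Assembling these gives the two reproducing identities, which is precisely the sense in which $\triangle \left( P,Q\right) $ serves as the formal delta function on $\mathcal{M}$.
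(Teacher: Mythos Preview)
Your proof is correct and follows exactly the approach the paper intends: the paper's entire proof reads ``It's easy to prove by Proposition~\ref{res(fnfm)=deltamn},'' and your argument is precisely the expansion of that sentence --- write $f$ (resp.\ $g$) in the KN basis and apply the duality $Res[A_m\omega^n]=\delta_m^n$ termwise. You also silently correct what is evidently a typo in the statement: the second residue must be taken in $P$, not $Q$, since $A_n(P)g(P)$ is the one-form leg.

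One comment: your final paragraph on interchanging $Res_{S_+}$ with $\sum_{n\in Z'}$ is more analytic than the paper's framework requires. In this paper $\triangle(P,Q)$ is explicitly a \emph{formal} series (the ``formal delta function''), and the residue is being applied coefficientwise as a formal operation, so the interchange is definitional rather than a convergence issue; your observation that only finitely many terms survive (because $f$ and $g$ have finite KN expansions) already settles it without invoking the Szeg\"o expansions or level-line contours.
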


\begin{proof}
It's easy to prove by Proposition \ref{res(fnfm)=deltamn}.
\end{proof}

Recall in vertex algebras, the following properties of the formal delta
function is very useful in proving the locality axiom
\begin{equation}
\partial _{z}\delta \left( z,w\right) =-\partial _{w}\delta \left(
z,w\right) ,\text{ }\left( z^{n}-w^{n}\right) ^{m}\partial _{z}^{m+1}\delta
\left( z,w\right) =0.  \label{properties of formal delta function}
\end{equation}%
From the below Propositions, we know, as we hope, the formal delta function
on the Riemann surface also has the analogous properties in which $\partial
_{z}$ and $z^{n}$ are replaced by $\nabla _{P}$ and $A_{n}\left( P\right) $
respectively.

\begin{proposition}
$\nabla _{P}\bigtriangleup \left( P,Q\right) =-\nabla _{Q}\bigtriangleup
\left( P,Q\right) .$
\end{proposition}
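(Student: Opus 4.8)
The plan is to work directly from the series representation $\triangle(P,Q)=\sum_{n\in Z'}A_{n}(P)\,\omega^{n}(Q)$ and to compare the two expansions $\nabla_{P}\triangle(P,Q)=\sum_{n}\left( \nabla A_{n}(P)\right) \omega^{n}(Q)$ and $\nabla_{Q}\triangle(P,Q)=\sum_{n}A_{n}(P)\left( \nabla\omega^{n}(Q)\right) $ term by term. Since $\nabla=\nabla_{e}$ preserves the conformal weight, $\nabla A_{n}\in\mathcal{F}^{0}$ and $\nabla\omega^{n}\in\mathcal{F}^{1}$, so by Proposition \ref{res(fnfm)=deltamn} I may write $\nabla A_{n}(P)=\sum_{m}\eta_{m}^{n}A_{m}(P)$ with $\eta_{m}^{n}=Res\left[ \nabla A_{n}(P)\,\omega^{m}(P)\right] $, while Proposition \ref{lie derivative on omiga} already supplies $\nabla\omega^{n}(Q)=\sum_{m}\xi_{m}^{n}\omega^{m}(Q)$. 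Substituting and relabelling indices, $\nabla_{P}\triangle=\sum_{n,m}\eta_{n}^{m}A_{n}(P)\omega^{m}(Q)$ and $\nabla_{Q}\triangle=\sum_{n,m}\xi_{m}^{n}A_{n}(P)\omega^{m}(Q)$; because $\{A_{n}(P)\omega^{m}(Q)\}_{n,m\in Z'}$ is a basis, the proposition is equivalent to the single antisymmetry relation $\eta_{n}^{m}=-\xi_{m}^{n}$ for all $n,m\in Z'$.

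The heart of the argument is to establish this relation by integration by parts against $Res$. The product $A_{n}(P)\,\omega^{m}(P)$ lies in $\mathcal{F}^{1}$ and is holomorphic away from $S_{\pm}$, so by the Leibniz rule for the Lie derivative together with part (2) of Lemma \ref{lie derivative on 1-form} applied to this weight-one tensor, $\nabla A_{n}(P)\,\omega^{m}(P)+A_{n}(P)\,\nabla\omega^{m}(P)=\nabla\left( A_{n}(P)\omega^{m}(P)\right) =d\left\langle A_{n}(P)\omega^{m}(P),e(P)\right\rangle $. Applying $Res$ ($=Res_{S_{+}}$) annihilates the exact form on the right, so $Res\left[ \nabla A_{n}(P)\,\omega^{m}(P)\right] =-Res\left[ A_{n}(P)\,\nabla\omega^{m}(P)\right] $; expanding $\nabla\omega^{m}=\sum_{l}\xi_{l}^{m}\omega^{l}$ and using Proposition \ref{res(fnfm)=deltamn} again, the right-hand side equals $-\sum_{l}\xi_{l}^{m}\delta_{n}^{l}=-\xi_{n}^{m}$, hence $\eta_{m}^{n}=-\xi_{n}^{m}$, which is the desired identity after exchanging the names of $n$ and $m$.

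The remaining points are bookkeeping. I will note that the almost-graded structure of the Krichever--Novikov bases forces, for each fixed index, all but finitely many of the $\eta_{m}^{n}$ and $\xi_{m}^{n}$ to vanish (equivalently, $\nabla A_{n}$ and $\nabla\omega^{n}$ are genuine finite KN expansions), which legitimizes the term-by-term use of $Res$ and the rearrangements of the double sums above. I do not expect a real obstacle here; the only subtlety is that part (2) of Lemma \ref{lie derivative on 1-form} must be invoked for the product $A_{n}\omega^{m}$ and not for $A_{n}$ or $\omega^{m}$ separately, since it is precisely passing to the weight-one tensor that turns the Lie derivative into an exact differential with vanishing residue. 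An essentially equivalent but slightly slicker route, which I would mention as an alternative, is to pair $\left( \nabla_{P}+\nabla_{Q}\right) \triangle(P,Q)$ against an arbitrary $f\in\mathcal{F}^{0}$ in the variable $Q$, take $Res_{Q=S_{+}}$, and use the reproducing property of $\triangle$ together with $Res_{Q}\left[ \nabla_{Q}\beta\right] =0$ for $\beta\in\mathcal{F}^{1}$ to see that this pairing vanishes for every $f$, whence $\left( \nabla_{P}+\nabla_{Q}\right) \triangle=0$.
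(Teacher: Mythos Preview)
Your proof is correct and follows essentially the same route as the paper: expand $\nabla A_{n}$ and $\nabla\omega^{n}$ in the KN bases, use that $Res\bigl[\nabla(A_{n}\omega^{m})\bigr]=0$ (the residue of an exact form) to get the antisymmetry $\eta=-\xi$ between the expansion coefficients, and then match the two double sums. The paper's argument is terser and does not spell out the appeal to Lemma~\ref{lie derivative on 1-form}(2) or the finiteness of the expansions, but the logic is identical.
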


\begin{proof}
By Proposition \ref{res(fnfm)=deltamn}, $\nabla \omega ^{n}\left( P\right)
=\Sigma \xi _{u}^{n}\omega ^{u}\left( P\right) ,$ $\nabla A_{n}\left(
P\right) =\Sigma \eta _{n}^{u}A_{u}\left( P\right) ,$ where $\xi
_{u}^{n}=Res\left( \nabla \omega ^{n}\left( P\right) A_{u}\left( P\right)
\right) ,$ $\eta _{u}^{n}=Res\left( \nabla A_{u}\left( P\right) \omega
^{n}\left( P\right) \right) .$ Note that $Res\left[ \nabla \left(
A_{u}\left( P\right) \omega ^{n}\left( P\right) \right) \right] =0,$ then $%
\xi _{u}^{n}=-\eta _{u}^{n}.$ Hence%
\begin{eqnarray*}
\nabla _{P}\bigtriangleup \left( P,Q\right) &=&\left[ \nabla A_{n}\left(
P\right) \right] \omega ^{n}\left( Q\right) =\eta _{n}^{u}A_{u}\left(
P\right) \omega ^{n}\left( Q\right) =-\xi _{n}^{u}A_{u}\left( P\right)
\omega ^{n}\left( Q\right) \\
&=&-A_{u}\left( P\right) \nabla \omega ^{u}\left( Q\right) =-\nabla
_{Q}\bigtriangleup \left( P,Q\right) .
\end{eqnarray*}
\end{proof}

\begin{proposition}
\label{1121}$\left[ A_{u}\left( P\right) -A_{u}\left( Q\right) \right]
\triangle \left( P,Q\right) =0$ with $u\in Z^{\prime }.$
\end{proposition}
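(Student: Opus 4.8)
The plan is to prove the identity coefficient-by-coefficient in the expansion $\triangle(P,Q)=\sum_{n\in Z'}A_n(P)\,\omega^n(Q)$, mimicking the classical fact that $(z-w)\delta(z,w)=0$. First I would expand the left-hand side using the definition of the formal delta function and the multiplication rule for the functions $A_u$: writing $A_u(P)A_n(P)=\sum_{k}c_{u,n}^k A_k(P)$ (structure constants obtained from the associative algebra of KN functions via $c_{u,n}^k=\operatorname{Res}(A_u(P)A_n(P)\,\omega^k(P))$, which is well-defined by Proposition \ref{res(fnfm)=deltamn}), I get
\begin{equation*}
A_u(P)\triangle(P,Q)=\sum_{n\in Z'}\sum_{k}c_{u,n}^k A_k(P)\,\omega^n(Q).
\end{equation*}
On the other hand, $A_u(Q)\triangle(P,Q)=\sum_{n\in Z'}A_n(P)\,A_u(Q)\omega^n(Q)$, and here I would use that $A_u(Q)\omega^n(Q)$, being a meromorphic $1$-form holomorphic away from $S_\pm$, expands in the $\omega$-basis as $A_u(Q)\omega^n(Q)=\sum_k d_{u,n}^k\omega^k(Q)$ with $d_{u,n}^k=\operatorname{Res}(A_u(Q)\omega^n(Q)A_k(Q))$. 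Comparing with the structure constants above shows $d_{u,n}^k=c_{u,k}^n$, so after relabeling indices the two double sums coincide term by term, giving $[A_u(P)-A_u(Q)]\triangle(P,Q)=0$.

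An alternative and perhaps cleaner route, which I would actually prefer to present, is to use the defining property of $\triangle(P,Q)$ as a delta function directly: by the Proposition just above (that $\operatorname{Res}_{Q=S_+}(\triangle(P,Q)f(Q))=f(P)$ for $f\in\mathcal{F}^0$), the form $[A_u(P)-A_u(Q)]\triangle(P,Q)$, which is a meromorphic $1$-form in $Q$ holomorphic outside $S_\pm$ (the possible pole of $\triangle$ along $P=Q$ is cancelled by the vanishing factor $A_u(P)-A_u(Q)$, since $i_{P,Q}S_1-i_{Q,P}S_1=S_1$ away from the diagonal and $S_1$ has only a simple pole there with the leading coefficient killed by $A_u(P)-A_u(Q)$). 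Pairing it against an arbitrary basis function $A_m(Q)$ and taking $\operatorname{Res}_{Q=S_+}$ yields $A_u(P)\,A_m(P)-A_u(P)\,A_m(P)=0$ for every $m$, hence all its $\omega$-coefficients vanish, hence it is zero.

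The main obstacle in either approach is the same technical point: one must check carefully that the product $[A_u(P)-A_u(Q)]\,i_{Q,P}S_1(P,Q)$ (and likewise with $i_{P,Q}$) really is regular along the diagonal, i.e. that multiplication by $A_u(P)-A_u(Q)$ genuinely kills the simple pole of the Szeg\"o kernel — this uses the local expansion $S_1(P,Q)|_{U}=\frac{1}{z(P)-z(Q)}[1+o((z(P)-z(Q))^2)]dz(Q)$ together with $A_u(P)-A_u(Q)=(z(P)-z(Q))A_u'(Q)+O((z(P)-z(Q))^2)$ in a common coordinate chart, so that the product is holomorphic at $P=Q$. Once regularity is secured, the residue-pairing argument (or the coefficient comparison) is routine given Proposition \ref{res(fnfm)=deltamn}. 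I would also remark that for $g=0$ this recovers the familiar $(z-w)\delta(z,w)=0$, which is a useful sanity check.
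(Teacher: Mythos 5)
Your first argument is exactly the paper's proof: the paper likewise expands $A_u(P)A_n(P)=\sum_m\alpha_{un}^m A_m(P)$ and $A_u(Q)\omega^n(Q)=\sum_m\alpha_{um}^n\omega^m(Q)$, with the structure constants read off by the residue pairing of Proposition \ref{res(fnfm)=deltamn}, and then cancels the two double sums after relabelling indices, so on this route there is nothing to add. Concerning the alternative you say you would prefer: the residue-pairing step itself is sound, since pairing $[A_u(P)-A_u(Q)]\triangle(P,Q)$ against each $A_m(Q)$ and invoking the delta property gives $A_u(P)A_m(P)-A_u(P)A_m(P)=0$, and coefficient extraction is legitimate because for fixed $m$ only finitely many structure constants are nonzero. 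But your ``main obstacle'' is a red herring: $\triangle(P,Q)=i_{P,Q}S_{1}-i_{Q,P}S_{1}$ is a formal series, the two expansions being valid in the disjoint regions $\tau(P)>\tau(Q)$ and $\tau(Q)>\tau(P)$, so $\triangle$ is not an actual meromorphic $1$-form with a pole on the diagonal and there is no analytic regularity along $P=Q$ to check (just as $(z-w)\delta(z,w)=0$ is not proved by cancelling a pole of $\frac{1}{z-w}$). Once that spurious analytic step is dropped, the ``cleaner'' route collapses to the same finite algebraic bookkeeping with the constants $\alpha_{un}^m$, i.e.\ to the paper's (and your first) argument; so either presentation is acceptable, but the second gains nothing and invites confusion between the formal delta function and the Szeg\"o kernel it was built from.
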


\begin{proof}
Since $A_{u}\left( P\right) \cdot $ $A_{n}\left( P\right) \in \mathcal{F}%
^{0} $ and $\{A_{n}\left( P\right) |$ $n\in Z^{\prime }\}$ is a basis of $%
\mathcal{F}^{0},$ we can suppose that
\begin{equation*}
A_{u}\left( P\right) A_{n}\left( P\right) =\sum_{m\in Z^{\prime }}\alpha
_{un}^{m}A_{m}\left( P\right) ,
\end{equation*}%
where $\alpha _{un}^{m}\in
\mathbb{C}
.$ Note that $Res\left( A_{m}\left( P\right) \omega ^{k}\left( P\right)
\right) =\delta _{m}^{k},$ we get $\alpha _{un}^{m}=Res\left[ A_{u}\left(
P\right) A_{n}\left( P\right) \omega ^{m}\left( P\right) \right] .$ Using
the same method, we have
\begin{equation*}
A_{u}\left( Q\right) \omega ^{n}\left( Q\right) =\alpha _{um}^{n}\omega
^{m}\left( Q\right) .
\end{equation*}%
Hence%
\begin{eqnarray*}
&&\left[ A_{u}\left( P\right) -A_{u}\left( Q\right) \right] \triangle \left(
P,Q\right) \\
&=&\left[ A_{u}\left( P\right) -A_{u}\left( Q\right) \right] A_{n}\left(
P\right) \omega ^{n}\left( Q\right) \\
&=&\left[ A_{u}\left( P\right) A_{n}\left( P\right) \right] \omega
^{n}\left( Q\right) -A_{n}\left( P\right) \left[ A_{u}\left( Q\right) \omega
^{n}\left( Q\right) \right] \\
&=&\alpha _{un}^{m}A_{m}\left( P\right) \omega ^{n}\left( Q\right)
-A_{n}\left( P\right) \alpha _{um}^{n}\omega ^{m}\left( Q\right) =0.
\end{eqnarray*}
\end{proof}

Differentiating the formula in the above proposition with resepct to $P,$
and multiplying the result by $\left( A_{u}\left( P\right) -A_{u}\left(
Q\right) \right) ,$ we obtain
\begin{equation}
\left[ A_{u}\left( P\right) -A_{u}\left( Q\right) \right] ^{2}d_{P}\triangle
\left( P,Q\right) =0,  \label{1122}
\end{equation}%
where $d_{P}\triangle \left( P,Q\right) $ means $d\left( A_{n}\left(
P\right) \right) \omega ^{n}\left( Q\right) .$

\begin{theorem}
\label{214}For any $u\in Z^{\prime },m,n\in
\mathbb{Z}
_{+},$ $\left[ A_{u}\left( P\right) -A_{u}\left( Q\right) \right]
^{m+n+2}\left( \nabla _{P}^{m}\nabla _{Q}^{n}d_{P}\triangle \left(
P,Q\right) \right) =0$ on $\mathcal{M}\times \mathcal{M}.$
\end{theorem}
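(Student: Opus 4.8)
The plan is to prove the identity by induction on $s:=m+n$, the base case $m=n=0$ being exactly Eq.~(\ref{1122}). Throughout, write $D=D(P,Q):=A_u(P)-A_u(Q)$; since $A_u\in\mathcal{F}^0$ this is a meromorphic function on $\mathcal{M}\times\mathcal{M}$, and the one structural fact I will need is that the Lie derivative preserves conformal weight, so $\nabla_P D=\nabla A_u(P)$ and $\nabla_Q D=-\nabla A_u(Q)$ are again elements of $\mathcal{F}^0$, i.e.\ functions (of $P$, resp.\ of $Q$). In particular multiplication by $\nabla_P D$ or $\nabla_Q D$ commutes with multiplication by $D$ and with the (formal) expansions $\triangle$, $d_P\triangle$ and all their $\nabla$-derivatives. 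The inductive step then consists of applying one more Lie derivative to a lower instance of the statement, expanding by the Leibniz rule, and multiplying by one extra copy of $D$ so that the cross term collapses by the inductive hypothesis.

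Concretely, assume $D^{\,m'+n'+2}\nabla_P^{m'}\nabla_Q^{n'}d_P\triangle=0$ for all $m'+n'\le s-1$, and let $m+n=s\ge 1$. If $m\ge 1$, the hypothesis for the pair $(m-1,n)$ gives $D^{\,s+1}\nabla_P^{m-1}\nabla_Q^{n}d_P\triangle=0$; applying $\nabla_P$ and Leibniz (coefficientwise on the KN expansion) yields
\begin{equation*}
(s+1)\,D^{\,s}(\nabla_P D)\,\nabla_P^{m-1}\nabla_Q^{n}d_P\triangle
+D^{\,s+1}\nabla_P^{m}\nabla_Q^{n}d_P\triangle=0 .
\end{equation*}
Multiplying by $D$ and sliding the function $\nabla_P D$ past $D^{\,s+1}$, the first summand becomes $(s+1)(\nabla_P D)\,D^{\,s+1}\nabla_P^{m-1}\nabla_Q^{n}d_P\triangle=0$ by the inductive hypothesis, so $D^{\,s+2}\nabla_P^{m}\nabla_Q^{n}d_P\triangle=0$. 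If instead $m=0$ (hence $n=s\ge 1$), the identical computation with $\nabla_Q$ applied to $D^{\,s+1}\nabla_Q^{\,s-1}d_P\triangle=0$ closes the step. This finishes the induction and proves the theorem.

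I do not expect a genuine obstacle; the only points needing care are (i) the legitimacy of the Leibniz rule on the formal sum $\triangle(P,Q)=\sum_{n\in Z'}A_n(P)\,\omega^n(Q)$, which is fine since it is applied term by term and each $A_n,\omega^n$ is an honest meromorphic tensor, and (ii) the observation that $\nabla_P\big(A_u(P)-A_u(Q)\big)$ has weight $0$ — precisely what permits the Leibniz cross term to be reabsorbed, after one more multiplication by $A_u(P)-A_u(Q)$, into a lower instance of the very statement being proved. As a variant one could first check $\nabla_P d_P\triangle=-\nabla_Q d_P\triangle$ (a short computation using $\nabla\omega^n=\xi_u^n\omega^u$, $\nabla A_n=-\xi_n^u A_u$ and $\mathcal{L}d=d\mathcal{L}$), which reduces everything to the single-variable claim $\big[A_u(P)-A_u(Q)\big]^{k+2}\nabla_P^{k}d_P\triangle=0$, handled by the same one-step induction.
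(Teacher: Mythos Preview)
Your argument is correct and is essentially identical to the paper's: both proceed by induction on $m+n$ with base case Eq.~(\ref{1122}), apply the Leibniz rule for $\nabla_P$ (or $\nabla_Q$) to an instance with one fewer derivative, and then use the inductive hypothesis twice---once directly and once with an extra factor of $A_u(P)-A_u(Q)$---to kill the two resulting terms. The only cosmetic difference is the order of operations: the paper starts from the target $D^{s+2}\nabla_P^{m}\nabla_Q^{n}d_P\triangle$ and expands it as $\nabla_P\big(D^{s+2}\nabla_P^{m-1}\nabla_Q^{n}d_P\triangle\big)-(\nabla_P D^{s+2})\nabla_P^{m-1}\nabla_Q^{n}d_P\triangle$, whereas you differentiate the hypothesis $D^{s+1}\nabla_P^{m-1}\nabla_Q^{n}d_P\triangle=0$ first and then multiply by $D$.
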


\begin{proof}
By Eq. \ref{1122}, we assume it's true when $m+n=\tilde{N}$ with $\tilde{N}%
\in
\mathbb{Z}
_{+}.$ We will prove the theorem by induction. When $m+n=\tilde{N}+1,$ there
exists a pair of non-negative integers $\left( M,N\right) $ such that $%
\left( m,n\right) =\left( M+1,N\right) $ or $\left( M,N+1\right) .$ We only
prove the theorem for the first case, the other case can be got by the
complete way. Note that $\left[ A_{u}\left( P\right) -A_{u}\left( Q\right) %
\right] ^{M+N+3}\left( \nabla _{P}^{M+1}\nabla _{Q}^{N}d_{P}\triangle \left(
P,Q\right) \right) $ is equal to
\begin{equation*}
\nabla _{P}\left( \left[ A_{u}\left( P\right) -A_{u}\left( Q\right) \right]
^{M+N+3}\nabla _{P}^{M}\nabla _{Q}^{N}d_{P}\triangle \left( P,Q\right)
\right) -\left( \nabla _{P}\left[ A_{u}\left( P\right) -A_{u}\left( Q\right) %
\right] ^{M+N+3}\right) \nabla _{P}^{M}\nabla _{Q}^{N}d_{P}\triangle \left(
P,Q\right) .
\end{equation*}%
By our inductive assumption, the first term is zero. The second term is
\begin{equation*}
-\left( M+N+3\right) \nabla _{P}A_{u}\left( P\right) \left[ A_{u}\left(
P\right) -A_{u}\left( Q\right) \right] ^{M+N+2}\nabla _{P}^{M}\nabla
_{Q}^{N}d_{P}\triangle \left( P,Q\right) .
\end{equation*}%
Using our inductive assumption again, we got the result.
\end{proof}

\section{KN vertex algebras on compact Riemann surfaces}

\subsection{KN\ Fields and derivatives}

Recall in a vertex algebra, a formal series $a\left( z\right) =\sum_{n\in
\mathbb{Z}
}a_{n}z^{n}$ is a field if for any $v\in V,$ $a_{n}v=0,$ $n\ll 0.$ In other
words, the formal series is a formal Laurent series.

For a higher genus Riemann surface, as we explained in the introduction, the
space $End\left( V\right) \left[ \left[ z,z^{-1}\right] \right] $ is
replaced by $End\left( V\right) \left[ \left[ A\left( P\right) ,A\left(
P\right) ^{-1}\right] \right] ,$ and the formal series $a\left( z\right) $
is replaced by $a\left( P\right) =\sum_{n\in Z^{\prime }}a_{n}A_{n}\left(
P\right) .$ The new formal series is called a KN field if for any $v\in V,$
there exists a number $N$ such that $a_{n}v=0$ for $n\leq N.$

Recall the derivative on the field $a\left( z\right) $ is defined by $%
\partial _{z}$ which acts on functions $z^{n}$ directly. Since $\nabla
_{e\left( P\right) }$ is the extension of $\partial _{z}$ on any Riemann
surface, then the derivative of a KN field $a\left( P\right) $ is defined
according to Lie derivative, that is, $\nabla a\left( P\right) =$ $%
\sum_{n\in Z^{\prime }}a_{n}\nabla A_{n}\left( P\right) .$

\subsection{KN vertex algebras}

Using the above data, we can give the notation of a KN vertex algebra on a
compact Riemann surface.

\begin{definition}
Suppose $\mathcal{M}$ is a compact Riemann surface with genus $g,$ $S_{+}$
and $S_{-}$ are two distinguished points in general positions. \textbf{A
Krichever-Novikov vertex algebra} $\left( \mathcal{M},V,S_{\pm
},T,\left\vert 0\right\rangle \right) $ (KN vertex algebra) is a collection
of data

\begin{enumerate}
\item Space of states: $V$ is a $%
\mathbb{Z}
-$graded vector space (graded by weights)%
\begin{equation*}
V=\bigoplus\limits_{n\in
\mathbb{Z}
}V_{n},\text{ for }v\in V_{n},n=wt\{v\}
\end{equation*}%
where $\dim V_{n}<\infty $ for $n\in
\mathbb{Z}
$ and $V_{n}=0$ for $n$ sufficiently negative;

\item Vacuum vector: $|0\rangle \in V_{0};$

\item Translation operator: a linear operator $T:V\rightarrow V$;

\item KN\ vertex operators: a linear operation%
\begin{equation*}
\begin{array}{lllll}
Y\left( \cdot ,P\right) & : & V & \rightarrow & End\left( V\right) \left[ %
\left[ A^{+}\left( P\right) ,A^{-}\left( P\right) \right] \right]%
\end{array}%
\end{equation*}%
taking each $a\in V$ to a KN field acting on $V,$%
\begin{equation*}
Y\left( a,P\right) =\sum_{n\in Z^{\prime }}a_{n}A_{n}\left( P\right) ,\text{
}Z^{\prime }=%
\mathbb{Z}
+\frac{g}{2}.
\end{equation*}
\end{enumerate}

And these data are subject to the following axioms:

\begin{enumerate}
\item Vacuum axiom: $Y\left( |0\rangle ,P\right) =Id_{V}.$ Furthermore, for
any $a\in V,$ $Y\left( a,P\right) \left\vert 0\right\rangle $ has a
well-defined value at $P=S_{+}.$ When $a$ is a homogeneous element with
weight $k,$ then%
\begin{equation*}
Y\left( a,P\right) |0\rangle |_{P=S_{+}}\equiv a\text{ }mod\text{ }%
\bigoplus\limits_{n<k}V_{n}.
\end{equation*}

\item Translation axiom: $T|0\rangle =0.$ For any nonzero space $V_{n},$
there always exists nonzero element $a$ in $V_{n}$ such that%
\begin{equation*}
\left[ T,Y\left( a,P\right) \right] =\nabla Y\left( a,P\right) .
\end{equation*}

\item Locality axiom: For any $a,b\in V,$ there exists a positive integer $N$
such that
\begin{equation*}
F_{u}\left( P,Q\right) ^{N}\left[ Y\left( a,P\right) ,Y\left( b,Q\right) %
\right] =0,
\end{equation*}%
on $\mathcal{M\times M},$ where $F_{u}\left( P,Q\right) =A_{u}\left(
P\right) -A_{u}\left( Q\right) $ for any $u\in Z^{\prime }.$
\end{enumerate}
\end{definition}

\section{KN vertex algebras of generalized Heisenberg algebras}

In this section, we will construct a KN vertex algebras of the generalized
Heisenberg\ algebra on an arbitrary compact Riemann surface as an example.
Firstly, we recall the definition of the generalized Heisenberg algebra (%
\cite{KN1}-\cite{KN3}).

\begin{definition}
By a \textbf{generalized Heisenberg\ algebra} $\mathcal{\hat{A}}$ connected
with a compact Riemann surface $\mathcal{M}$ with genus $g$ and a pair of
points $S_{+}$ and $S_{-}$ in a general position is meant an algebra
generated by generators $A_{n}$ and a central element $1$ with relations
\begin{equation}
\left[ A_{n},A_{m}\right] =\gamma _{nm}1,\text{ }\left[ A_{n},1\right] =0,
\label{'211}
\end{equation}%
where the numbers $\gamma _{nm}$ are defined as%
\begin{equation}
\gamma _{nm}=Res\left( A_{m}\left( P\right) dA_{n}\left( P\right) \right) ,
\label{'213}
\end{equation}%
where $A_{n}\left( P\right) ,n\in Z^{\prime }$ are the KN basis connected
with $S_{+}$ and $S_{-}.$
\end{definition}

For $g=0,$ $S_{\pm }$ are the points $0$ and $\infty ,$ the Lie brackets \ref%
{'211} become%
\begin{equation*}
\left[ A_{n},A_{m}\right] =-n\delta _{n+m,0},\text{ }\left[ A_{n},1\right]
=0.
\end{equation*}%
Hence on a Riemann sphere, the generalized Heisenberg algebra becomes the
ordinary Heisenberg algebra.

\bigskip Now, we construct a representation of the generalized Heisenberg
algebra. Note that $\gamma _{nm}=0$ when $n,m\geq \frac{g}{2},$ then we get
a commutative subalgera $\mathcal{\hat{A}}_{+}$ generated by $A_{n}$ and $1$
with $n\geq \frac{g}{2}$ which has a trivial 1-dimensional representation $%
\mathbb{C}
.$ Hence the algebra has an induced representation of the generalized
Heisenberg algebra%
\begin{equation*}
V=Ind_{\mathcal{\hat{A}}_{+}}^{\mathcal{\hat{A}}}%
\mathbb{C}
=U\left( \mathcal{\hat{A}}\right) \otimes _{U\left( \mathcal{\hat{A}}%
_{+}\right) }%
\mathbb{C}
.
\end{equation*}
Clearly, any element in $\mathcal{\hat{A}}$ can be seen as an endomorphism
of $V.$ Let $|0\rangle =1\otimes 1.$ By the Poincar$\acute{e}$-Birkhoff-Witt
theorem, $V$ has a PBW basis%
\begin{equation}
W=\left\{ \left. A_{-j_{1}+\frac{g}{2}}\cdots A_{-j_{n}+\frac{g}{2}%
}|0\rangle \right\vert j_{1}\geq j_{2}\geq \ldots \geq j_{n}\geq 1,j_{i}\in
\mathbb{N}
\right\} ,  \label{basis of V}
\end{equation}%
and V is a $%
\mathbb{Z}
-$graded vector space by defining the weight of the element $A_{-j_{1}+\frac{%
g}{2}}\cdots A_{-j_{n}+\frac{g}{2}}|0\rangle $ in the basis as $j_{1}+\cdots
+j_{n}.$ Denote the weight of a homogeneous element $v$ by $wt\left(
v\right) .$

\begin{remark}
$V$ is graded only as a vector space, not a representation. In fact, as a
representasion, $V$ is quasi-graded.
\end{remark}

Define the space of states of KN\ vertex algebra by $V,$ the translation
operator by the actions
\begin{equation*}
T|0\rangle =0\text{ \ and }\left[ T,A_{n}\right] =\sum_{u\in Z^{\prime }}\xi
_{n}^{u}A_{u},
\end{equation*}%
where $\xi _{n}^{u}=-Res_{P=S_{+}}\left( \left\langle \omega ^{u}\left(
P\right) ,e_{\frac{3g}{2}-1}\left( P\right) \right\rangle dA_{n}\left(
P\right) \right) ,$ and KN vertex operators by:%
\begin{eqnarray*}
Y\left( |0\rangle ,P\right) &=&Id_{V} \\
Y\left( A_{\frac{g}{2}-1}|0\rangle ,P\right) &=&\sum_{n\in Z^{\prime
}}A_{n}\langle \omega ^{n}\left( P\right) ,e\left( P\right) \rangle \\
Y\left( A_{\frac{g}{2}-m-1}|0\rangle ,P\right) &=&\frac{1}{m!}\nabla
^{m}Y\left( A_{\frac{g}{2}-1}|0\rangle ,P\right) .
\end{eqnarray*}%
For simplicity, denote $Y\left( A_{\frac{g}{2}-1}|0\rangle ,P\right) $ by $%
A\left( P\right) $ sometimes. For elements as $A_{-j_{1}+\frac{g}{2}%
-1}\cdots A_{-j_{n}+\frac{g}{2}-1}|0\rangle $ with $n>1,$ before defining
their KN vertex operators, we need to give the notation of normal ordered
product.

Now we recall the case in a vertex algebras, the normal ordered product of
two fields is defined as%
\begin{equation}
:a\left( w\right) b\left( w\right) :=Res_{z=0}\left[ a\left( z\right)
b\left( w\right) i_{z,w}\frac{1}{z-w}-b\left( w\right) a\left( z\right)
i_{w,z}\frac{1}{z-w}\right] ,  \label{NOP1}
\end{equation}%
that is,%
\begin{equation}
:a\left( w\right) b\left( w\right) :=\sum {}_{n<0}a_{n}w^{-n-1}b\left(
z\right) +b\left( w\right) \sum {}_{n\geq 0}a_{n}w^{-n-1}.  \label{NOP2}
\end{equation}%
Since $i_{P,Q}$ and $i_{Q,P}$ are the extensions of $i_{z,w}$ and $i_{w,z}$
respectively, the simple and natural way is to use some analogous form of Eq.%
\ref{NOP1} to define the normal ordered product. Now, we give the notation
on a pair of one form and $\mu -$form series with the coefficient in $%
End\left( V\right) .$ Let $a\left( Q\right) =\sum a_{n}\omega ^{n}\left(
Q\right) ,$ $b_{\mu }\left( Q\right) =\sum b_{n}f_{\mu }^{n}\left( Q\right)
, $ where $f_{\mu }^{n}\left( Q\right) $ is a differential $\mu -$form.
Define the KN normal ordered product of the formal series $a\left( Q\right) $
and $b_{\mu }\left( Q\right) $ as%
\begin{equation}
:a\left( Q\right) b_{\mu }\left( Q\right) :=Res{}_{P=S_{+}}\left(
i_{P,Q}S_{1}\left( P,Q\right) a\left( P\right) b_{\mu }\left( Q\right)
-i_{Q,P}S_{1}\left( P,Q\right) b_{\mu }\left( Q\right) a\left( P\right)
\right) .  \label{NOPres}
\end{equation}%
By direct calcus, we have
\begin{equation*}
:a\left( Q\right) b_{\mu }\left( Q\right) :=\sum_{n<\frac{g}{2}}a_{n}\omega
^{n}\left( Q\right) b_{\mu }\left( Q\right) +b_{\mu }\left( Q\right)
\sum_{n\geq \frac{g}{2}}a_{n}\omega ^{n}\left( Q\right)
\end{equation*}%
which is similar to the equation \ref{NOP2}. Through the normal ordered
product is defined only on the pair of 1-form and $\lambda -$form series, it
is enough for us.

For the fields $a^{\prime }\left( P\right) =a\left( P\right) \left( e\left(
P\right) \right) $ and $b_{\mu }^{\prime }\left( Q\right) =b_{\mu }\left(
Q\right) \left( e^{\mu }\left( Q\right) \right) ,$ we define their normal
ordered product to be
\begin{equation}
:a^{\prime }\left( Q\right) b_{\mu }^{\prime }\left( Q\right) :=:a\left(
Q\right) b_{\mu }\left( Q\right) :\left( e^{\mu +1}\left( Q\right) \right)
\label{KN-nop}
\end{equation}%
where $e^{\mu }\left( Q\right) =e\left( Q\right) ^{\otimes \mu },$ $a\left(
P\right) \left( e\left( P\right) \right) $ means the coupling of 1-form
series $a\left( P\right) $ and the tangent field $e\left( P\right) ,$ that
is, $\sum_{n\in Z^{\prime }}a_{n}\left\langle \omega ^{n}\left( P\right)
,e\left( P\right) \right\rangle ,$ $b_{\mu }\left( Q\right) \left( e^{\mu
}\left( Q\right) \right) $ and $:a\left( Q\right) b_{\mu }\left( Q\right)
:\left( e^{\mu +1}\left( Q\right) \right) $ have the same meaning. Now we
can define the KN vertex operators as follows\bigskip
\begin{eqnarray*}
Y\left( A_{-j_{1}+\frac{g}{2}-1}\cdots A_{-j_{n}+\frac{g}{2}-1}|0\rangle
,P\right) &:&=Y\left( A_{-j_{1}+\frac{g}{2}-1}|0\rangle ,P\right) \cdots
Y\left( A_{-j_{n}+\frac{g}{2}-1}|0\rangle ,P\right) : \\
&:&=Y\left( A_{-j_{1}+\frac{g}{2}-1}|0\rangle ,P\right) \cdots :Y\left(
A_{-j_{n-1}+\frac{g}{2}-1}|0\rangle ,P\right) Y\left( A_{-j_{n}+\frac{g}{2}%
-1}|0\rangle ,P\right) ::.
\end{eqnarray*}%
It's not difficulty to prove the KN vertex operators are KN\ fields. We make
our focus on proving the axioms of KN vertex algebra.

\subsection{Vacuum axiom, transation axiom}

The statement $Y\left( \left\vert 0\right\rangle ,P\right) =ID$ follows from
our definition. The remainder of the vacuum axiom follows by induction. We
start with the case $A_{\frac{g}{2}-m-1}\left\vert 0\right\rangle $ with $%
m\geq 0.$ Note
\begin{eqnarray*}
Y\left( A_{\frac{g}{2}-m-1}|0\rangle ,P\right) |0\rangle &=&\frac{1}{m!}%
\nabla ^{m}Y\left( A_{\frac{g}{2}-1}|0\rangle ,P\right) \left\vert
0\right\rangle \\
&=&\frac{1}{m!}\sum_{n\leq \frac{g}{2}-1}A_{n}\left\langle \nabla ^{m}\omega
^{n}\left( P\right) ,e\left( P\right) \right\rangle \left\vert 0\right\rangle
\\
&=&\frac{1}{m!}\sum_{n\leq \frac{g}{2}-1}A_{n}\xi _{u_{1}}^{n}\cdots \xi
_{u_{m}}^{u_{m-1}}\left\langle \omega ^{u_{m}}\left( P\right) ,e\left(
P\right) \right\rangle \left\vert 0\right\rangle .
\end{eqnarray*}%
If $A_{n}\xi _{u_{1}}^{n}\cdots \xi _{u_{m-1}}^{u_{m-1}}\not=0,$ then $n\leq
\frac{g}{2}-1-m$ by Lemma \ref{prop183}(3), correspondingly, $u_{m}\leq
\frac{g}{2}-1$ by Lemma\ref{prop183} (2). Hence the KN\ vertex operator
action on $\left\vert 0\right\rangle $ is well defined at $P=S_{+}.$ More
precisely,%
\begin{eqnarray*}
\left. Y\left( A_{\frac{g}{2}-m-1}|0\rangle ,P\right) |0\rangle \right\vert
_{P=S_{+}} &=&\frac{1}{m!}\sum_{n\leq \frac{g}{2}-1-m}\sum_{u_{m}\leq \frac{g%
}{2}-1}\left. A_{n}\xi _{u_{1}}^{n}\cdots \xi _{u_{m}}^{u_{m-1}}z^{-u_{m}+%
\frac{g}{2}-1}\left( 1+o\left( z\right) \right) \left\vert 0\right\rangle
\right\vert _{z=0} \\
&=&\frac{1}{m!}\sum_{n\leq \frac{g}{2}-1-m}A_{n}\xi _{u_{1}}^{n}\cdots \xi _{%
\frac{g}{2}-1}^{u_{m-1}}\left\vert 0\right\rangle .
\end{eqnarray*}%
By Lemma\ref{prop183} (2), the sum above is not zero only when $n=\frac{g}{2}%
-1-m.$ Using Lemma\ref{prop183} (1), we got $\left. Y\left( A_{\frac{g}{2}%
-m-1}|0\rangle ,P\right) |0\rangle \right\vert _{P=S_{+}}=A_{\frac{g}{2}%
-m-1}|0\rangle .$

Define the level of the element $A_{\frac{g}{2}-n_{m}-1}\cdots A_{\frac{g}{2}%
-n_{1}-1}\left\vert 0\right\rangle $\ by $m.$ Then for any element of level
one, we have proved its KN vertex operator satisfies the vacuum axiom. For
the higher levels, we will prove it by induction as follows.

\begin{theorem}
\label{162}For any element $a\in V,$ $Y\left( a,P\right) |0\rangle $ is well
defined at $P=S_{+}.$ Moreover, if \ a is a homogeneous element in $V$ with
weight $m,$ then $Y\left( a,P\right) |0\rangle |_{P=S_{+}}\equiv a$ $mod $ $%
\oplus _{k<m}V_{k}|0\rangle .$
\end{theorem}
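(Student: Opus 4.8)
The plan is to prove the statement by induction on the \emph{level} $\ell$ of $a$, that is, the number of creation operators in a PBW monomial $a=A_{k_1}\cdots A_{k_\ell}|0\rangle$ (each $k_i\le\tfrac{g}{2}-1$); by linearity it suffices to treat such monomials. Level $0$ is immediate, since $Y(|0\rangle,P)=\mathrm{Id}_V$ forces $Y(|0\rangle,P)|0\rangle=|0\rangle$, and level $1$ is exactly the computation carried out in the paragraph preceding the theorem. For $\ell\ge2$ I would set $a=A_{k_1}b$ with $k_1=\tfrac{g}{2}-p_1-1$ ($p_1\ge0$) and $b=A_{k_2}\cdots A_{k_\ell}|0\rangle$ of level $\ell-1$; since the normal ordering in the definition of $Y(\cdot,P)$ is nested from the right, peeling off the leftmost factor gives at once
\[
Y(a,P)=\,:Y\!\left(A_{k_1}|0\rangle,P\right)Y(b,P):\,,
\]
with $Y(b,P)$ the KN vertex operator of the level-$(\ell-1)$ element $b$.

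The next step is to feed $|0\rangle$ into the normal-ordered-product formula. Writing $Y(A_{k_1}|0\rangle,P)=\tfrac{1}{p_1!}\nabla^{p_1}A(P)=\sum_u\widetilde A_u\,\langle\omega^u(P),e(P)\rangle$, where $\widetilde A_u=\tfrac{1}{p_1!}\sum_n A_n\,\sigma^{(p_1)}_{u,n}$ and $\sigma^{(p_1)}_{u,n}=\sum_{u_1,\dots,u_{p_1-1}}\xi^n_{u_1}\xi^{u_1}_{u_2}\cdots\xi^{u_{p_1-1}}_{u}$ is the iterated $\xi$-coefficient coming from Proposition~\ref{lie derivative on omiga}, one checks that $\widetilde A_u|0\rangle=0$ for every $u\ge\tfrac{g}{2}$. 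Indeed, only $n<\tfrac{g}{2}$ survives in $\widetilde A_u|0\rangle$ because $A_n|0\rangle=0$ for $n\ge\tfrac{g}{2}$; and for such $n$, part (2) of Lemma~\ref{prop183} gives $\sigma^{(p_1)}_{u,n}=0$ unless $u\le n+p_1$, which (as $u\ge\tfrac{g}{2}$) puts $n$ in the window $\{\tfrac{g}{2}-p_1,\dots,\tfrac{g}{2}-1\}$, on which part (3) says $\sigma^{(p_1)}_{u,n}=0$ anyway. Hence in $:Y(A_{k_1}|0\rangle,P)Y(b,P):$ the half with $u\ge\tfrac{g}{2}$ stands to the right of $|0\rangle$ and drops out, leaving
\[
Y(a,P)|0\rangle=\sum_{u<\frac{g}{2}}\langle\omega^u(P),e(P)\rangle\;\widetilde A_u\,Y(b,P)|0\rangle .
\]

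Now comes the local analysis near $S_+$, paralleling the level-$1$ case. From the expansions recalled in Section~\ref{KN basis in notation}, for $u<\tfrac{g}{2}$ the scalar $\langle\omega^u(P),e(P)\rangle$ is holomorphic at $S_+$, vanishing there to order $\tfrac{g}{2}-1-u\ge0$, and it equals $1$ at $S_+$ precisely when $u=\tfrac{g}{2}-1$; by the inductive hypothesis $Y(b,P)|0\rangle$ is holomorphic at $S_+$ with value $\equiv b\pmod{\bigoplus_{k<\mathrm{wt}(b)}V_k}$. Consequently every summand, and hence $Y(a,P)|0\rangle$ itself, is holomorphic at $S_+$, and on setting $P=S_+$ only the $u=\tfrac{g}{2}-1$ term contributes. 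For that $u$, parts (1)--(3) of Lemma~\ref{prop183} force $\sigma^{(p_1)}_{\frac{g}{2}-1,\,n}=0$ for $n<\tfrac{g}{2}$ unless $n=\tfrac{g}{2}-1-p_1$, where it equals $p_1!$; so $\widetilde A_{\frac{g}{2}-1}$ agrees with $A_{\frac{g}{2}-1-p_1}=A_{k_1}$ up to a combination of operators $A_n$ with $n\ge\tfrac{g}{2}$, each of which only lowers weight. Therefore
\[
Y(a,P)|0\rangle\big|_{P=S_+}=\widetilde A_{\frac{g}{2}-1}\!\left(Y(b,P)|0\rangle\big|_{P=S_+}\right)\equiv A_{k_1}b=a\pmod{\textstyle\bigoplus_{k<m}V_k},
\]
where I use $m=\mathrm{wt}(a)=\mathrm{wt}(b)+(\tfrac{g}{2}-k_1)$ and that $A_{k_1}$ raises weights by at most $\tfrac{g}{2}-k_1$, so $A_{k_1}\bigl(\bigoplus_{k<\mathrm{wt}(b)}V_k\bigr)\subseteq\bigoplus_{k<m}V_k$.

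The routine ingredients are the base cases and the $\xi$-calculus of Lemma~\ref{prop183}, already exercised for level $1$. The point that needs the most care is the step ``each summand holomorphic at $S_+$'' $\Rightarrow$ ``$Y(a,P)|0\rangle$ holomorphic at $S_+$'', together with the extraction of its value there: the clean way is to use that $Y(a,P)$ is a KN field, so $Y(a,P)|0\rangle=\sum_r(a_r|0\rangle)A_r(P)$ with $a_r|0\rangle=0$ for $r\ll0$, to recover $a_r|0\rangle=\mathrm{Res}_{S_+}\!\left(Y(a,P)|0\rangle\,\omega^r(P)\right)$ from Proposition~\ref{res(fnfm)=deltamn}, and then to read off from the vanishing orders above that this residue is $0$ for $r<\tfrac{g}{2}$ while $a_{g/2}|0\rangle=\widetilde A_{\frac{g}{2}-1}\!\left(Y(b,P)|0\rangle|_{P=S_+}\right)$. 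The second delicate point is the weight bookkeeping through the final multiplication by $A_{k_1}$, which is harmless precisely because the central term in $[A_n,A_m]=\gamma_{nm}1$ produces only strictly-lower-weight corrections.
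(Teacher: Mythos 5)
Your argument is correct and follows essentially the same route as the paper's proof: induction on the level, peeling off the leftmost factor via the nested normal-ordered product, using parts (1)--(3) of Lemma~\ref{prop183} to kill the annihilation half on $|0\rangle$ and to isolate the $u=\tfrac{g}{2}-1$ term at $P=S_+$, and then the same weight bookkeeping through the central terms. Your extraction of the coefficients $a_r|0\rangle$ by residues against $\omega^r(P)$ is a slightly more careful justification of "well defined at $S_+$" than the paper's term-by-term evaluation, but the substance is identical.
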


\begin{proof}
Assume that it is true for anyelement of level $m$ in the basis $W.$ That
is, for any element $A_{\frac{g}{2}-n_{m}-1}\cdots A_{\frac{g}{2}%
-n_{1}-1}\left\vert 0\right\rangle $ in $V$ with $n_{m}\geq n_{m-1}\geq
\cdots \geq n_{1}\geq 0,$ we have
\begin{equation*}
Y\left( A_{\frac{g}{2}-n_{m}-1}\cdots A_{\frac{g}{2}-n_{1}-1}\left\vert
0\right\rangle ,P\right) |0\rangle |_{P=S_{+}}=A_{\frac{g}{2}-n_{m}-1}\cdots
A_{\frac{g}{2}-n_{1}-1}\left\vert 0\right\rangle +v|0\rangle ,\text{ }
\end{equation*}%
where $v\in \oplus _{k<n_{1}+\cdots +n_{m}+m}V_{k}.$

For the element $A_{\frac{g}{2}-n-1}A_{\frac{g}{2}-n_{m}-1}\cdots A_{\frac{g%
}{2}-n_{1}-1}\left\vert 0\right\rangle $ with $n\geq n_{m}\geq \cdots \geq
n_{1}\geq 0$, by the definition of KN\quad normal ordered product and
Proposition \ref{lie derivative on omiga}, we have
\begin{eqnarray*}
&&Y\left( A_{\frac{g}{2}-n-1}A_{\frac{g}{2}-n_{m}-1}\cdots A_{\frac{g}{2}%
-n_{1}-1}\left\vert 0\right\rangle ,P\right) \\
&=&\frac{1}{n!}:\nabla ^{n}A\left( P\right) Y\left( A_{\frac{g}{2}%
-n_{m}-1}\cdots A_{\frac{g}{2}-n_{1}-1}\left\vert 0\right\rangle ,P\right) :
\\
&=&\frac{1}{n!}:A_{k}\xi _{u_{1}}^{k}\xi _{u_{2}}^{u_{1}}\cdots \xi
_{u_{n}}^{u_{n-1}}\left\langle \omega ^{u_{n}}\left( P\right) ,e\left(
P\right) \right\rangle Y\left( A_{\frac{g}{2}-n_{m}-1}\cdots A_{\frac{g}{2}%
-n_{1}-1}\left\vert 0\right\rangle ,P\right) : \\
&=&Y\left( A_{\frac{g}{2}-n_{m}-1}\cdots A_{\frac{g}{2}-n_{1}-1}\left\vert
0\right\rangle ,P\right) \sum_{u_{n}\geq \frac{g}{2}}A_{k}\frac{\xi
_{u_{1}}^{k}\xi _{u_{2}}^{u_{1}}\cdots \xi _{u_{n}}^{u_{n-1}}}{n!}%
\left\langle \omega ^{u_{n}}\left( P\right) ,e\left( P\right) \right\rangle
\\
&&+\sum_{u_{n}<\frac{g}{2}}A_{k}\frac{\xi _{u_{1}}^{k}\xi
_{u_{2}}^{u_{1}}\cdots \xi _{u_{n}}^{u_{n-1}}}{n!}\left\langle \omega
^{u_{n}}\left( P\right) ,e\left( P\right) \right\rangle Y\left( A_{\frac{g}{2%
}-n_{m}-1}\cdots A_{\frac{g}{2}-n_{1}-1}\left\vert 0\right\rangle ,P\right) .
\end{eqnarray*}%
For $u_{n}\geq \frac{g}{2},$ if $\xi _{u_{1}}^{k}\xi _{u_{2}}^{u_{1}}\cdots
\xi _{u_{n}}^{u_{n-1}}\not=0,$ then by Lemma \ref{prop183}$(2),$ $k\geq
u_{n}-n\geq \frac{g}{2}-n.$ Using the lemma(3), we know $k$ should be bigger
than $\frac{g}{2}.$ Then the first sum kills $\left\vert 0\right\rangle ,$
and by our inductive assumption, the second sum gives a series with positive
powers of $z$ with the constant term%
\begin{equation*}
A_{k}\frac{\xi _{u_{1}}^{k}\xi _{u_{2}}^{u_{1}}\cdots \xi _{\frac{g}{2}%
-1}^{u_{n-1}}}{n!}\left( A_{\frac{g}{2}-n_{m}-1}\cdots A_{\frac{g}{2}%
-n_{1}-1}\left\vert 0\right\rangle +v\left\vert 0\right\rangle \right) .
\end{equation*}%
If $\xi _{u_{1}}^{k}\xi _{u_{2}}^{u_{1}}\cdots \xi _{\frac{g}{2}%
-1}^{u_{n-1}}\not=0,$ then $k\geq \frac{g}{2}-n-1$ by Lemma\ref{prop183}(2),
correspondingly, $k\geq \frac{g}{2}$ or $k=\frac{g}{2}-n-1$ by the third
part of the lemma. Hence%
\begin{eqnarray*}
&&Y\left( A_{\frac{g}{2}-n-1}A_{\frac{g}{2}-n_{m}-1}\cdots A_{\frac{g}{2}%
-n_{1}-1}\left\vert 0\right\rangle ,P\right) |0\rangle |_{P=S_{+}} \\
&=&\left( \frac{\xi _{u_{1}}^{\frac{g}{2}-n-1}\xi _{u_{2}}^{u_{1}}\cdots \xi
_{\frac{g}{2}-1}^{u_{n-1}}}{n!}A_{\frac{g}{2}-n-1}+\sum_{k\geq \frac{g}{2}%
}a^{k}A_{k}\right) \left( A_{\frac{g}{2}-n_{m}-1}\cdots A_{\frac{g}{2}%
-n_{1}-1}\left\vert 0\right\rangle +v|0\rangle \right) \\
&=&A_{\frac{g}{2}-n-1}\cdots A_{\frac{g}{2}-n_{1}-1}\left\vert
0\right\rangle +A_{\frac{g}{2}-n-1}v|0\rangle +\sum_{k\geq \frac{g}{2}%
-n}a^{k}A_{k}\left( A_{\frac{g}{2}-n_{m}-1}\cdots A_{\frac{g}{2}%
-n_{1}-1}\left\vert 0\right\rangle +v|0\rangle \right) ,
\end{eqnarray*}%
where $a^{k}=\frac{\xi _{u_{1}}^{k}\xi _{u_{2}}^{u_{1}}\cdots \xi _{\frac{g}{%
2}-1}^{u_{_{n-1}}}}{n!}.$ Here, the last equation is got by Lemma\ref%
{prop183}(3). Because weights of homogenous elements in $A_{\frac{g}{2}%
-n}v\left\vert 0\right\rangle $ and $A_{k}\left( A_{\frac{g}{2}%
-n_{m}-1}\cdots A_{\frac{g}{2}-n_{1}-1}\left\vert 0\right\rangle +v|0\rangle
\right) $ are smaller than $n_{1}+\cdots +n_{m}+n+m+1,$ then
\begin{eqnarray*}
&&Y\left( A_{\frac{g}{2}-n-1}A_{\frac{g}{2}-n_{m}-1}\cdots A_{\frac{g}{2}%
-n_{1}-1}\left\vert 0\right\rangle ,P\right) |0\rangle |_{P=S_{+}} \\
&=&A_{\frac{g}{2}-n-1}A_{\frac{g}{2}-n_{m}-1}\cdots A_{\frac{g}{2}%
-n_{1}-1}\left\vert 0\right\rangle \text{ }mod\text{ }\left( \oplus
_{k<n_{1}+\cdots +n_{m}+n+m+1}V_{k}\right) |0\rangle .
\end{eqnarray*}%
Hence for any homogenegous element $a$ with level $m+1,$ the vacuum axiom is
satisfied.
\end{proof}

\bigskip Now, we prove the translation axiom.

For $\left\vert 0\right\rangle \in V_{0},$ from $T|0\rangle =0$ and $Y\left(
|0\rangle ,P\right) =Id_{V},$ we have $\left[ T,Y\left( |0\rangle ,P\right) %
\right] =\nabla Y\left( |0\rangle ,P\right) .$

For $A_{\frac{g}{2}-1}\left\vert 0\right\rangle \in V_{1},$ we have
\begin{eqnarray}
\left[ T,A_{\frac{g}{2}-1}\left\vert 0\right\rangle \right] &=&\sum_{n\in
Z^{\prime }}\left[ T,A_{n}\right] \left\langle \omega ^{n}\left( P\right)
,e\left( P\right) \right\rangle  \notag \\
&=&\sum_{n,u\in Z^{\prime }}\left( \xi _{n}^{u}A_{u}\right) \left\langle
\omega ^{n}\left( P\right) ,e\left( P\right) \right\rangle  \notag \\
&=&\sum_{u\in Z^{\prime }}A_{u}\left\langle \nabla \omega ^{u}\left(
P\right) ,e\left( P\right) \right\rangle  \notag \\
&=&\nabla A\left( P\right) ,  \label{251}
\end{eqnarray}%
where the first and second equation are got by the fact $\left[ T,A_{n}%
\right] =\sum_{u\in Z^{\prime }}\xi _{n}^{u}A_{u}$ and the equation $\nabla
\omega ^{u}\left( P\right) =\xi _{n}^{u}\omega ^{n}\left( P\right) $
respectively. For $A_{-n+\frac{g}{2}-1}|0\rangle \in V_{n+1},$ from
\begin{equation*}
\left[ T,Y\left( A_{-n+\frac{g}{2}-1}|0\rangle ,P\right) \right] =\frac{1}{n!%
}\left[ T,\nabla ^{n}A\left( P\right) \right] =\frac{1}{n!}\nabla ^{n}\left[
T,A\left( P\right) \right]
\end{equation*}%
and the equation $\left( \ref{251}\right) ,$ we get%
\begin{equation*}
\left[ T,Y\left( A_{-n+\frac{g}{2}-1}|0\rangle ,P\right) \right] =\frac{1}{n!%
}\nabla ^{n+1}A\left( P\right) =\nabla Y\left( A_{-n+\frac{g}{2}-1}|0\rangle
,P\right) .
\end{equation*}

Hence, for any $V_{n}$ with $n\in
\mathbb{Z}
_{+},$ there exists an element $a$ in $V_{n}$ such that
\begin{equation*}
\left[ T,Y\left( a,P\right) \right] =\nabla Y\left( a,P\right) .
\end{equation*}

\subsection{Locality axiom}

Recall the state space has a basis%
\begin{equation*}
W=\left\{ A_{\frac{g}{2}-n_{1}-1}\cdots A_{\frac{g}{2}-n_{k}-1}|0\rangle |%
\text{ }k\geq 0,\text{ }n_{1}\geq \cdots \geq n_{k}\geq 0\right\} ,
\end{equation*}%
and the element $A_{\frac{g}{2}-n_{1}-1}\cdots A_{\frac{g}{2}%
-n_{k}-1}|0\rangle $ in $W$ has level $k.$

Note that for any element $a$ in $W$ with level $k,$ $Y\left( a,P\right) $
has the form $\sum a_{n_{1}\cdots n_{k}}\omega ^{n_{1}}\left( P\right)
\cdots \omega ^{n_{k}}\left( P\right) \left( e^{k}\left( P\right) \right) .$
If we denote $\tilde{Y}\left( a,P\right) =\sum a_{n_{1}\cdots n_{k}}\omega
^{n_{1}}\left( P\right) \cdots \omega ^{n_{k}}\left( P\right) ,$ then $%
Y\left( a,P\right) =\tilde{Y}\left( a,P\right) \left( e^{k}\left( P\right)
\right) .$ Since for arbitrary two elements $a,b$ in $W$ with $level\left(
a\right) =k,level\left( b\right) =k^{\prime },$%
\begin{equation*}
\left[ Y\left( a,P\right) ,Y\left( b,P\right) \right] =\left[ \tilde{Y}%
\left( a,P\right) ,\tilde{Y}\left( b,Q\right) \right] \left( e^{k}\left(
P\right) \otimes e^{k^{\prime }}\left( Q\right) \right) ,
\end{equation*}%
then the\ locality axiom holds on the generalized Heisenberg KN vertex
algebra if the following equalities are true
\begin{equation}
F_{u}^{N}\left( P,Q\right) \left[ \tilde{Y}\left( a,P\right) ,\tilde{Y}%
\left( b,Q\right) \right] =0,\text{ }\forall u\in Z^{\prime },N\gg 0.
\label{star}
\end{equation}

In the following, we will prove that for any $a,b\in W,$ $\tilde{Y}\left(
a,P\right) ,\tilde{Y}\left( b,Q\right) $ satisy the condition $\left( \ref%
{star}\right) $ which is also called locality.

Firstly, we check that $\left( \ref{star}\right) $ holds for $a=b=A_{\frac{g%
}{2}-1}.$ Since

\begin{equation*}
\left[ \tilde{Y}\left( A_{\frac{g}{2}-1}|0\rangle ,P\right) ,\tilde{Y}\left(
A_{\frac{g}{2}-1}|0\rangle ,Q\right) \right] =\left[ A_{n},A_{m}\right]
\omega ^{n}\left( P\right) \omega ^{m}\left( Q\right) =\gamma _{nm}\omega
^{n}\left( P\right) \omega ^{m}\left( Q\right)
\end{equation*}%
and
\begin{equation*}
d_{P}\triangle \left( P,Q\right) =\left[ dA_{m}\left( P\right) \right]
\omega ^{m}\left( Q\right) =\gamma _{mn}\omega ^{n}\left( P\right) \omega
^{m}\left( Q\right) =-\gamma _{nm}\omega ^{n}\left( P\right) \omega
^{m}\left( Q\right) ,
\end{equation*}%
where $\gamma _{nm}=Res\left( A_{m}\left( P\right) dA_{n}\left( P\right)
\right) ,$ then $\left[ \tilde{Y}\left( A_{\frac{g}{2}-1}|0\rangle ,P\right)
,\tilde{Y}\left( A_{\frac{g}{2}-1}|0\rangle ,Q\right) \right]
=-d_{P}\triangle \left( P,Q\right) .$ By Formula $\left( \ref{1122}\right) ,$
we have%
\begin{equation*}
F_{u}^{2}\left( P,Q\right) \left[ \tilde{Y}\left( A_{\frac{g}{2}-1}|0\rangle
,P\right) ,\tilde{Y}\left( A_{\frac{g}{2}-1}|0\rangle ,Q\right) \right] =0,%
\text{ }\forall u\in Z^{\prime }.
\end{equation*}%
According to Theorem $\left( \ref{214}\right) $ and
\begin{equation*}
\left[ \tilde{Y}\left( A_{\frac{g}{2}-n-1}|0\rangle ,P\right) ,\tilde{Y}%
\left( A_{\frac{g}{2}-m-1}|0\rangle ,Q\right) \right] =\frac{1}{n!m!}\left[
\nabla ^{n}\tilde{Y}\left( A_{\frac{g}{2}-1}|0\rangle ,P\right) ,\nabla ^{m}%
\tilde{Y}\left( A_{\frac{g}{2}-1}|0\rangle ,Q\right) \right] =\frac{1}{n!m!}%
\nabla _{P}^{n}\nabla _{Q}^{m}d_{P}\triangle \left( P,Q\right) ,
\end{equation*}%
we get $\tilde{Y}\left( A_{\frac{g}{2}-n-1}|0\rangle ,P\right) $ and $\tilde{%
Y}\left( A_{\frac{g}{2}-m-1}|0\rangle ,Q\right) $ are local.

Finally, locality of any two formal series \bigskip $\tilde{Y}\left(
a,P\right) $ and $\tilde{Y}\left( b,Q\right) $ with $a,b\in W$ follows by an
induction from locality of $\tilde{Y}\left( A_{\frac{g}{2}-n-1}|0\rangle
,P\right) $ and $\tilde{Y}\left( A_{\frac{g}{2}-m-1}|0\rangle ,Q\right) $
using the following theorem. Hence the locality of any two fields can be got
because $W$ is a basis of the state space.

\begin{theorem}[Generalized Dong's Lemma]
\label{Modified Dong's Lemma}Let $g_{\mu }^{m}\left( P\right) =\omega
^{m_{1}}\left( P\right) \cdots \omega ^{m_{\mu }}\left( P\right) $ with $%
m_{i}\in Z^{\prime }.$ If $a\left( P\right) =\sum_{n\in Z^{\prime
}}a_{n}\omega ^{n}\left( P\right) ,$ $b_{\mu }\left( P\right) =\sum_{m\in
Z^{\prime }}b_{m}g_{\mu }^{m}\left( P\right) ,$ $c_{\gamma }\left( P\right)
=\sum_{n\in Z^{\prime }}c_{n}g_{\gamma }^{n}\left( P\right) $ are mutual
local, then $:a\left( P\right) b_{\mu }\left( P\right) :$ and $c_{\gamma
}\left( P\right) $ are mutual local in $\mathcal{M\times M}.$
\end{theorem}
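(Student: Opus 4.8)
The plan is to mimic the classical proof of Dong's Lemma in the vertex-algebra setting, adapting it to the Krichever–Novikov (KN) geometry where $z^n$ is replaced by $A_n(P)$, the rational function $\frac{1}{z-w}$ by the Szegö kernel $S_1(P,Q)$ and its two expansions $i_{P,Q}S_1$, $i_{Q,P}S_1$, and the differentiation by the Lie derivative $\nabla$. Recall from the definition of the KN normal ordered product (Eq.~\eqref{NOPres}) that
\[
:a(P)b_\mu(P):=Res_{R=S_+}\bigl(i_{R,P}S_1(R,P)\,a(R)\,b_\mu(P)-i_{P,R}S_1(R,P)\,b_\mu(P)\,a(R)\bigr).
\]
So $[\,:a(P)b_\mu(P):\,,\,c_\gamma(Q)\,]$ can be written as a residue at $R=S_+$ of two groups of triple commutators/associators involving $a(R)$, $b_\mu(P)$, $c_\gamma(Q)$. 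The first step is therefore to expand this bracket, using bilinearity, into terms each of which is (up to a factor that is a power series in the local coordinate, coming from $i_{R,P}S_1$ or $i_{P,R}S_1$) one of the expressions $[a(R),c_\gamma(Q)]\,b_\mu(P)$, $b_\mu(P)\,[a(R),c_\gamma(Q)]$, $a(R)\,[b_\mu(P),c_\gamma(Q)]$, $[b_\mu(P),c_\gamma(Q)]\,a(R)$ — exactly the rearrangement used in the usual Dong argument, where the associativity of the normal product with $c_\gamma$ is traded for mutual locality.

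Next I would invoke the three pairwise locality hypotheses: there is a single positive integer $N$ so that $F_u(P,Q)^N[a(R),c_\gamma(Q)]=0$ (with $R$ renamed to $P$), $F_u(P,Q)^N[b_\mu(P),c_\gamma(Q)]=0$, and $F_u(R,P)^N[a(R),b_\mu(P)]=0$, for the chosen $u\in Z'$ — one may take the same $N$ by enlarging. The claim will be that
\[
F_u(P,Q)^{3N}\,\bigl[\,:a(P)b_\mu(P):\,,\,c_\gamma(Q)\,\bigr]=0 .
\]
The mechanism is the standard three-term trick: write $F_u(P,Q)=F_u(P,R)+F_u(R,Q)$ (valid because $A_u(P)-A_u(Q)=(A_u(P)-A_u(R))+(A_u(R)-A_u(Q))$), raise to the power $3N$, and expand by the binomial theorem; in each monomial at least one of the exponents on $F_u(P,R)$ or on $F_u(R,Q)$ is $\ge N$. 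A monomial with $F_u(R,Q)^{\ge N}$ annihilates the piece where $c_\gamma(Q)$ sits next to $a(R)$ (using $F_u(R,Q)^N[a(R),c_\gamma(Q)]=0$) and also the piece with $[b_\mu(P),c_\gamma(Q)]$ after re-expressing $F_u(R,Q)$ in terms of $F_u(P,Q)$ and $F_u(R,P)$ and absorbing the harmless extra factors; a monomial with $F_u(P,R)^{\ge N}$ kills the residue in $R$ by the locality $F_u(R,P)^N[a(R),b_\mu(P)]=0$, since $F_u(P,R)=-F_u(R,P)$ and the offending factor can be pulled inside $Res_{R=S_+}$. One must also check that multiplying by these polynomial factors $F_u$ commutes with $Res_{R=S_+}$ and with the expansions $i_{R,P}$, $i_{P,R}$ — this is routine because $F_u(P,Q)$ does not involve $R$, and $F_u(P,R)$, $F_u(R,Q)$ are polynomials in $A_\bullet$ whose multiplication is compatible with the expansion maps (in the spirit of Proposition~\ref{1121} and Eq.~\eqref{1122}).

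The main obstacle I anticipate is bookkeeping rather than conceptual: in the KN setting the factors $i_{R,P}S_1(R,P)$ and $i_{P,R}S_1(R,P)$ are genuine power series, not finite expressions, so one must be careful that the residue $Res_{R=S_+}$ of each rearranged term is well defined (each $a_n$ acts locally nilpotently, so only finitely many terms survive on any fixed vector) and that the polynomial $F_u$-factors can be moved through the residue and through the series without convergence issues — this is where the KN-field property (for any $v$, $a_nv=0$ for $n\le N_v$) and Theorem~\ref{214} are used in place of the naive finiteness one has over $\mathbb{C}[z,z^{-1}]$. A second, smaller point is that $c_\gamma(P)$ is a series in the $\mu$-form monomials $g_\gamma^n(P)$ rather than in a single $\omega^n$, but since locality is tested after stripping the $e^{k}(P)$-couplings (as in the reduction to \eqref{star}) and the commutators only see the $End(V)$-coefficients, the form-degree is just carried along and does not affect the argument. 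Once these are in place, the identity $F_u(P,Q)^{3N}[\,:a(P)b_\mu(P):\,,c_\gamma(Q)]=0$ follows, which is precisely mutual locality of $:a(P)b_\mu(P):$ and $c_\gamma(Q)$, completing the induction that establishes the locality axiom for all KN vertex operators $\tilde Y(a,P)$, $a\in W$.
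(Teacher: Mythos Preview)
Your overall shape is right --- take $M=3N$, use the binomial identity on $F_u$, and invoke all three pairwise localities --- and this is indeed the paper's strategy. But the execution has a real gap at the step where you invoke $a,b$--locality.

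You decompose $[\,{:}ab{:}\,,c\,]$ into the four Leibniz pieces $[a,c]\,b$, $b\,[a,c]$, $a\,[b,c]$, $[b,c]\,a$ and then claim that a monomial carrying $F_u(P,R)^{\ge N}$ is annihilated ``by the locality $F_u(R,P)^N[a(R),b_\mu(P)]=0$.'' But after your decomposition none of the four pieces contains $[a(R),b_\mu(P)]$; they contain only $[a,c]$ or $[b,c]$. So $F_u(P,R)^{N}$ has nothing to act on in, say, $i_{R,P}S_1\cdot[a,c]\,b$, and that term survives. The companion claim --- that the $[b,c]$ pieces can be handled ``after re-expressing $F_u(R,Q)$ in terms of $F_u(P,Q)$ and $F_u(R,P)$'' --- has the same defect one level down.

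The paper avoids this by \emph{not} breaking the commutator into Leibniz pieces and by reserving one block of $N$ factors: it writes
\[
F_u(Q,R)^{3N}=F_u(Q,R)^{N}\bigl(F_u(Q,P)+F_u(P,R)\bigr)^{2N}
\]
(with $P$ the residue variable, $Q$ the NOP variable, $R$ the $c$--variable). For the monomials with $F_u(Q,P)^{k}$, $k>N$, the locality of $a$ and $b$ is used to turn the \emph{entire} NOP integrand $-i_{P,Q}S_1\,a(P)b_\mu(Q)+i_{Q,P}S_1\,b_\mu(Q)a(P)$ into $\bigl(-i_{P,Q}S_1+i_{Q,P}S_1\bigr)a(P)b_\mu(Q)=-\triangle(P,Q)\,a(P)b_\mu(Q)$; one leftover factor $F_u(Q,P)$ then kills $\triangle(P,Q)$ by Proposition~\ref{1121}. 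Thus both sides of the desired equality vanish in this range. For $k\le N$ one has simultaneously $F_u(P,R)^{\,2N-k}$ with $2N-k\ge N$ \emph{and} the reserved $F_u(Q,R)^N$, so $c_\gamma(R)$ can be commuted past $a(P)$ and past $b_\mu(Q)$ separately, giving LHS $=$ RHS. The role of $a,b$--locality, then, is not to annihilate an explicit $[a,b]$ but to merge the two Szeg\H{o} expansions into the formal delta function --- that is the step your sketch is missing.
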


\begin{proof}
By assumption, we may find $N$ so that for all $m\geq N,$
\begin{eqnarray}
F_{u}\left( P,Q\right) ^{m}a\left( P\right) b_{\mu }\left( Q\right)
&=&F_{u}\left( P,Q\right) ^{m}b_{\mu }\left( Q\right) a\left( P\right) ,
\label{formula'221} \\
F_{u}\left( P,Q\right) ^{m}b_{\mu }\left( P\right) c_{\gamma }\left(
Q\right) &=&F_{u}\left( P,Q\right) ^{m}c_{\gamma }\left( Q\right) b_{\mu
}\left( P\right) ,  \label{formula'222} \\
F_{u}\left( P,Q\right) ^{m}a\left( P\right) c_{\gamma }\left( Q\right)
&=&F_{u}\left( P,Q\right) ^{m}c_{\gamma }\left( Q\right) a\left( P\right) .
\label{formula'223}
\end{eqnarray}%
We wish to find an integer $M$ such that
\begin{equation*}
F_{u}\left( Q,R\right) ^{M}:a\left( Q\right) b_{\mu }\left( Q\right)
:c_{\gamma }\left( R\right) =F_{u}\left( Q,R\right) ^{M}:a\left( Q\right)
b_{\mu }\left( Q\right) :c_{\gamma }\left( R\right) .
\end{equation*}%
By Formula$\left( \ref{NOPres}\right) ,$ this will follow from the statement%
\begin{eqnarray}
&&F_{u}\left( Q,R\right) ^{M}\left[ -i_{P,Q}S_{1}\left( P,Q\right) a\left(
P\right) b_{\mu }\left( Q\right) +i_{Q,P}S_{1}\left( P,Q\right) b_{\mu
}\left( Q\right) a\left( P\right) \right] c_{\gamma }\left( R\right)
\label{2.3.6} \\
&=&F_{u}\left( Q,R\right) ^{M}c_{\gamma }\left( R\right) \left(
-i_{P,Q}S_{1}\left( P,Q\right) a\left( P\right) b_{\mu }\left( Q\right)
+i_{Q,P}S_{1}\left( P,Q\right) b_{\mu }\left( Q\right) a\left( P\right)
\right) .  \notag
\end{eqnarray}%
Let us take $M=3N.$ Writing $F_{u}\left( Q,R\right) ^{3N}=F_{u}\left(
Q,R\right) ^{N}\sum_{k=0}^{2N}C_{2N}^{k}F_{u}\left( Q,P\right)
^{k}F_{u}\left( P,R\right) ^{2N-k},$ we see that in the terms in the left
hand side of $\left( \ref{2.3.6}\right) $ with $N<k\leq 2N$ vanish because
\begin{eqnarray}
&&F_{u}\left( Q,P\right) ^{k}\left[ -i_{P,Q}S_{1}\left( P,Q\right) a\left(
P\right) b_{\mu }\left( Q\right) +i_{Q,P}S_{1}\left( P,Q\right) b_{\mu
}\left( Q\right) a\left( P\right) \right]  \notag \\
&=&F_{u}\left( Q,P\right) ^{k-\left( N+1\right) }F_{u}\left( Q,P\right)
^{\left( N+1\right) }\left[ -i_{P,Q}S_{1}\left( P,Q\right) a\left( P\right)
b_{\mu }\left( Q\right) +i_{Q,P}S_{1}\left( P,Q\right) b_{\mu }\left(
Q\right) a\left( P\right) \right]  \notag \\
&=&F_{u}\left( Q,P\right) ^{k-\left( N+1\right) }F_{u}\left( Q,P\right)
\left[ -i_{P,Q}S_{1}\left( P,Q\right) +i_{Q,P}S_{1}\left( P,Q\right) \right]
F_{u}^{N}\left( Q,P\right) a\left( P\right) b_{\mu }\left( Q\right)  \notag
\\
&=&F_{u}\left( Q,P\right) ^{k-\left( N+1\right) }\left[ F_{u}\left(
Q,P\right) \triangle \left( P,Q\right) \right] \left[ a\left( P\right)
b_{\mu }\left( Q\right) F_{u}\left( Q,P\right) ^{N}\right]  \notag \\
&=&0,  \label{'2419}
\end{eqnarray}%
where the last equality is got by the fact $\triangle \left( P,Q\right)
F_{u}\left( P,Q\right) =0.$ For the terms with $0\leq k\leq N,$ by $\left( %
\ref{formula'221}\right) -\left( \ref{formula'223}\right) ,$ we get%
\begin{eqnarray}
&&F_{u}\left( Q,R\right) ^{N}F_{u}\left( P,R\right) ^{k}\left[
-i_{P,Q}S_{1}\left( P,Q\right) a\left( P\right) b_{\mu }\left( Q\right)
+i_{Q,P}S_{1}\left( P,Q\right) b_{\mu }\left( Q\right) a\left( P\right) %
\right] c_{\gamma }\left( R\right)  \notag \\
&=&F_{u}\left( Q,R\right) ^{N}F_{u}\left( P,R\right) ^{k}c_{\gamma }\left(
R\right) \left[ -i_{P,Q}S_{1}\left( P,Q\right) a\left( P\right) b_{\mu
}\left( Q\right) +i_{Q,P}S_{1}\left( P,Q\right) b_{\mu }\left( Q\right)
a\left( P\right) \right] .  \label{'251}
\end{eqnarray}%
The same phenomena occures on the right side of $\left( \ref{2.3.6}\right) :$
the terms with $N\leq k\leq 2N$ will vanish, and the other terms give us the
same expression as what we now have on the left hand side. Thus we have
establish $\left( \ref{2.3.6}\right) ,$ and hence the theorem.
\end{proof}

\subsection{Generalized Heisenberg KN vertex algebras on Riemann spheres}

Recall that on a Riemann sphere, choose $S_{+}=0$ and $S_{-}=\infty ,$ then $%
A_{n}\left( z\right) =z^{n},\omega ^{n}\left( z\right) =z^{-n-1}dz,e\left(
z\right) =\frac{\partial }{\partial z}$ with $z\in
\mathbb{C}
\cup \{\infty \}.$ Since
\begin{eqnarray*}
\gamma _{nm} &=&-Res_{P=S_{+}}\left( A_{n}\left( z\right) dA_{m}\left(
z\right) \right) \\
&=&-Res_{z=0}\left( mz^{n+m-1}dz\right) \\
&=&-m\delta _{n}^{-m}=n\delta _{n}^{-m},
\end{eqnarray*}%
then the Generalized Heisenberg algebra on $%
\mathbb{C}
P^{1}$ is generated by $A_{n}$ and a central element $1$ with relations%
\begin{equation*}
\left[ A_{n},A_{m}\right] =n\delta _{n}^{-m}1,\text{ }\left[ A_{n},1\right]
=0,
\end{equation*}%
which is indeed the structure of the Heisenberg algebra.

Since $e\left( z\right) =\partial _{z},$ then $\nabla _{e\left( z\right) }$
is $\partial _{z}$ when acts on meromorphic functions. Corresponding, the
Lie derivative on a field $a\left( z\right) =\sum_{n\in
\mathbb{Z}
}a_{n}A_{n}\left( z\right) $ is $\nabla _{e\left( z\right) }a\left( z\right)
=\sum_{n\in
\mathbb{Z}
}a_{n}\partial _{z}\left( A_{n}\left( z\right) \right) .$

Because $Z^{\prime }=%
\mathbb{Z}
+\frac{g}{2}=%
\mathbb{Z}
$ and $\left\langle \omega ^{n}\left( z\right) ,e\left( z\right)
\right\rangle =z^{-n-1},$ then the vertex operators are defined as
\begin{eqnarray*}
Y\left( A_{-1}\left\vert 0\right\rangle ,z\right) &=&\sum_{n\in
\mathbb{Z}
}A_{n}\left\langle \omega ^{n}\left( z\right) ,e\left( z\right)
\right\rangle =\sum_{n\in
\mathbb{Z}
}A_{n}z^{-n-1} \\
Y\left( A_{-m-1}\left\vert 0\right\rangle ,z\right) &=&\frac{1}{m!}%
\sum_{n\in
\mathbb{Z}
}A_{n}\nabla ^{m}\left\langle \omega ^{n}\left( z\right) ,e\left( z\right)
\right\rangle \\
&=&\frac{1}{m!}\sum_{n\in
\mathbb{Z}
}A_{n}\partial _{z}^{m}z^{-n-1}=\frac{1}{m!}\partial _{z}^{m}Y\left(
A_{-1}\left\vert 0\right\rangle ,z\right) ,
\end{eqnarray*}%
and
\begin{equation*}
Y\left( A_{-n_{1}-1}\cdots A_{-n_{m}-1}\left\vert 0\right\rangle ,z\right)
=:Y\left( A_{-n_{1}-1}\left\vert 0\right\rangle ,z\right) \cdots :Y\left(
A_{-n_{m-1}-1}\left\vert 0\right\rangle ,z\right) Y\left(
A_{-n_{m}-1}\left\vert 0\right\rangle ,z\right) ::.
\end{equation*}

Next, we proceed to consider the form of KN normal ordered product. Since%
\begin{equation*}
Y\left( A_{-m-1}\left\vert 0\right\rangle ,z\right) =\sum_{n\in
\mathbb{Z}
}\frac{1}{m!}A_{n}\nabla ^{m}\omega ^{n}\left( z\right) ,e\left( z\right)
b\left( z\right) =\sum_{n\in
\mathbb{Z}
}\tbinom{-n}{m}A_{n}\left\langle \omega ^{n+m}\left( z\right) ,e\left(
z\right) \right\rangle b\left( z\right) :,
\end{equation*}%
then by the definition of KN normal ordered product, for any KN\ field $%
b\left( z\right) ,$%
\begin{eqnarray*}
&:&Y\left( A_{-m-1}\left\vert 0\right\rangle ,z\right) b\left( z\right) : \\
&=&\sum_{n+m\leq -1}\tbinom{-n}{m}A_{n}\left\langle \omega ^{n+m}\left(
z\right) ,e\left( z\right) \right\rangle b\left( z\right) +b\left( z\right)
\sum_{n+m\geq 0}\tbinom{-n}{m}\left\langle \omega ^{n+m}\left( z\right)
,e\left( z\right) \right\rangle A_{n} \\
&=&\sum_{n+m\leq -1}\tbinom{-n}{m}A_{n}z^{-n-m-1}b\left( z\right) +b\left(
z\right) \sum_{n+m\geq 0}\tbinom{-n}{m}z^{-n-m-1}A_{n} \\
&=&Y\left( A_{-m-1}\left\vert 0\right\rangle ,z\right) _{+}b\left( z\right)
+b\left( z\right) Y\left( A_{-m-1}\left\vert 0\right\rangle ,z\right) _{-},
\end{eqnarray*}%
For any KN field $b\left( z\right) ,$ where $Y\left( a,z\right)
_{+}:=\sum_{n<0}a_{n}z^{-n-1},$ $Y\left( a,z\right) _{-}:=\sum_{n\geq
0}a_{n}z^{-n-1}.$ Clearly the KN normal ordered product is same as normal
ordered product in the vertex algebra. Hence KN vertex operators of the KN
vertex algebra on a Riemann sphere are same as in the Heisenberg vertex
algebra.

Recall the transition operator $T$ is defined by the actions $T\left\vert
0\right\rangle =0$ and $\left[ T,A_{m}\right] =\sum_{n\in
\mathbb{Z}
}\xi _{m}^{n}A_{n}.$ Since, on a Riemann sphere, $\xi _{m}^{n}=-Res\left(
z^{-n-1}dz^{m}\right) =-m\delta _{m-1}^{n},$ then $T$ is same as in the
vertex algebra.

Hence, the generalized Heisenberg KN vertex algebra on a Riemann sphere is
same as the Heisenberg vertex algebra.

\begin{acknowledgement}
The first author would like to thank Professor Chongying Dong for his
various kinds of help when she researched in University of California, Santa
Cruz.
\end{acknowledgement}


\begin{thebibliography}{99}
\bibitem{B1} R. E. Borcherds, Vertex algebras, Kac-Moody algebras, and the
Monster, Proc. Natl. Acad. Sci. USA 83 (1986), 3068-3071.

\bibitem{B2} R. E. Borcherds, Monstrous moonshine and monstrous Lie
superalgebras, Invent. Math. 109 (1992),405-444.

\bibitem{BD4} A. Beilinson and V. Drinfeld, Chiral algebra, Colloq. Publ.
51, AMS (2004).

\bibitem{Bonora} L. Bonora, A. Lugo, M. Matone, J. Russo, A global operator
formalism on higher genus Riemann surfaces: b-c systems, Commun. Math. Phys.
123, 329-352 (1989).

\bibitem{CN} J. H. Conway and S. P. Norton, Monstrous moonshine, Bull.
London Math. Soc. 11 (1979), 308-339.

\bibitem{Ld} Lu Ding, Zhujun Zheng, $P$-Twisted Affine Lie Algebras and
Their Associated Vertex Algebras, Commun. Theor. Phys. 50 7-12.

\bibitem{19} H. Earkas, I. Kra, Riemann surfaces, Berlin, Heidelberg, New
York: Springer 1980.

\bibitem{F1} J. D. Fay, Theta functions on Riemann surfaces, Springer
Lecture Notes in Math. 352 (1972).

\bibitem{FLM1} I. Frenkel, J. Lepowsky and A. Meurman, A natural
representation of the Fischer-Griess Monster with the modular function J as
character, Proc. Natl. Acad. Sci. USA81 (1984), 3256-3260.

\bibitem{FLM3} I. Frenkel, J. Lepowsky and A. Meurman, Vertex operator
algebras and the Monster, Pure and Appl. Math. Vol. 134, Academic Press,
Boston (1988).

\bibitem{FB} Edward Frenkel and David Ben-Zvi, Vertex algebras and algebraic
curves, Math. Surv. Mon. Vol. 88, AMS (2004).

\bibitem{FLM2} I. Frenkel, J. Lepowsky and A. Meurman, Vertex operator
calculus, in: Mathematical Aspects of String Theory, Proc. 1986 conference,
San Diego, ed. by S.-T. Yau, World Scientific Singapore (1987), 150-188.

\bibitem{G1} R. C. Gunning, Lectures on Riemann surfaces, Princeton
University Press, 1966.

\bibitem{G2} R. C. Gunning, Lectures on vector bundles over Riemann
surfaces, Princeton university press, 1967.

\bibitem{Kac3} V. Kac, Vertex algebras for beginners, Second Edition, AMS
(1998).

\bibitem{KN1} I. M. Krichever, S. P. Novikov, Algebras of virasoro type,
Riemann surfaces and structures of the theory of solitons, Funktional Anal.
i. Prilozhen. 21 (1987), no. 2, 46.

\bibitem{KN2} I. M. Krichever, S. P. Novikov, Virasoro type algebras,
Riemann surfaces and strings in Minkowski space, Funktional Anal. i.
Prilozhen. 21 (1987), no. 4, 47.

\bibitem{KN3} I. M. Krichever, S. P. Novikov, Algebras of Virasoro type,
energy-momentum tensors and decompositions of operators on Riemann surfaces,
Funktional Anal. i. Prilozhen. 23 (1989), no. 1, 19-23.

\bibitem{H} Y.-Z. Huang, Two-dimensional conformal geometry and vertex
operator algebras, Progress in Mathematics 148, Birkhauser, Boston, 1997.

\bibitem{L1} K. Linde, Towards vertex algebras of Krichever-Novikov type,
Part I. Preprint.

\bibitem{L2} K. Linde, Global vertex algebras on Riemann surfaces.

\bibitem{S1} O. Sheinman, Modules with highest weight for affine Lie
algebras on Riemann surfaces, Funct. Anal. Appl. 29 (1995), no.1, 44-45.

\bibitem{S2} O. Sheinman, A fermionic model of representations of affine
Krichever-Novikov algebras, Funct. Anal. Appl. 35 (2001), no.3, 209-219.

\bibitem{S3} O. Sheinman, Affine Krichever-Novikov algebras, Their
representations and Applications, Preprint math. RT/0304020.

\bibitem{wangshikun} Han-ying Guo, Ji-Sheng, Jian-min Shen, Shi-kun Wang and
Qi-huang Yu, The algebras of meromorphic vector fields and their realization
on the spaces of meromorphic $\lambda $-differentials on Riemann surfaces.
I, 1990, J. Phys. A: Math. Gen. 23 379-384.

\bibitem{Z} Y. Zhu, Global vertex operators on Riemann surfaces, Comm. Math
Phys. 165 (1994) 485-531.
\end{thebibliography}
\end{document}